\begin{document}

\def\si{\par\smallskip\noindent}
\def\bi{\par\bigskip\noindent}
\def\pr{\text{ P\/}}
\def\ex{\text{E\/}}
\def\de{\delta}
\def\eps{\varepsilon}
\def\la{\lambda}
\def\a{\alpha}
\def\be{\beta}
\def\de{\Delta}
\def\sig{\sigma}
\def\ga{\gamma}
\def\part{\partial}
\def\tag#1 {\eqno(#1)}
\def\Cal{\mathcal}
\def\var{\text{Var\/}}

\newtheorem{Theorem}{Theorem}[section]
\newtheorem{theorem}{Theorem}[section]
\newtheorem{Lemma}{Lemma}[section]
\newtheorem{Proposition}{Proposition}[section]
\newtheorem{Corollary}{Corollary}[section]
\newtheorem{corollary}{Corollary}[section]
\numberwithin{equation}{section}

\newcommand{\R}{\mathbb{R}}
\newcommand{\Z}{\mathbb{Z}}
\newcommand{\prob}{\mathbb{P}}
\newcommand{\link}{\mbox{lk}}

\title[Inside the critical window for cohomology]{Inside the critical window for cohomology of random $k$-complexes}
\author{Matthew Kahle}
\author{Boris Pittel}
\thanks{The first author gratefully acknowledges DARPA grant \# N66001-12-1-4226}
\thanks{The second author gratefully acknowledges NSF grant \# DMS-1101237}

\maketitle

\begin{abstract}  We prove sharper versions of theorems of Linial--Meshulam and Meshulam--Wallach which describe the
behavior for $(\Z/2)$-cohomology of a random $k$-dimensional simplicial complex within a narrow
transition window. In particular, we show that within this window the Betti number $\beta^{k-1}$ is in the limit Poisson distributed. For $k=2$ we also prove that in an accompanying growth process, with high probability, first cohomology vanishes exactly at the moment when the last isolated $(k-1)$-simplex gets covered by a $k$-simplex.
\end{abstract}

\section{Introduction}

In 1959 Erd\H{o}s and R\'enyi pioneered a systematic study of a graph $G(n,m)$
chosen uniformly at random among all graphs on vertex set $[n] = \{ 1, 2, \dots, n \}$ with exactly
$m$ edges. They found the threshold value $\bar m$ for
connectedness of $G(n,m)$ \cite{er}.

Here and throughout the paper {\it ``with high probability (w.h.p)''} means that the probability of an event approaches $1$ as the number of vertices $n \to \infty$.

\begin{theorem} [Erd\H{o}s--R\'enyi] \label{thm:ER} If
$$
m=\frac{n}{2}(\log n +c),
$$
where $c \in \R$ is constant, then w.h.p.\ $G(n,m)$ consists of a giant component and isolated vertices, and the number of isolated vertices converges in distribution to Poisson with mean $e^{-c}$.  In particular
  $$\prob [ G(n,m) \mbox{ is connected}\, ] \to e^{-e^{-c}},$$ 
as $n \to \infty.$
of $G(n,m)$.
\end{theorem}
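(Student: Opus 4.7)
\emph{Proof plan.} I would follow the classical two-step strategy. First, show that the number $X$ of isolated vertices converges in distribution to a Poisson random variable with mean $e^{-c}$, via the method of moments. Second, rule out components of size strictly between $1$ and the order of the giant, by a first-moment argument. Together these give that $G(n,m)$ is connected if and only if $X=0$, so $\prob[G(n,m)\text{ connected}]=\prob[X=0]\to e^{-e^{-c}}$.

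For the first step, I write $X=\sum_{v\in[n]}\mathbf{1}\{v\text{ isolated}\}$. Requiring a specified vertex to be isolated means placing all $m$ edges among the $\binom{n-1}{2}$ non-incident candidates, so
$$\prob[v\text{ isolated}]=\binom{\binom{n-1}{2}}{m}\Big/\binom{\binom{n}{2}}{m},$$
and Stirling's formula applied at $m=\tfrac{n}{2}(\log n+c)$ reduces the right side to $(1+o(1))e^{-c}/n$. Hence $\ex[X]\to e^{-c}$. The same calculation, applied to $k$ specified vertices, gives
$$\ex[(X)_k]=(n)_k\,\frac{\binom{\binom{n-k}{2}}{m}}{\binom{\binom{n}{2}}{m}}=(1+o(1))(e^{-c})^k,$$
for each fixed $k\ge 1$, and the method of moments yields the Poisson limit.

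For the second step, let $Y_k$ denote the number of connected components on exactly $k$ vertices, $2\le k\le n/2$. A union bound over the $\binom{n}{k}$ vertex sets and, by Cayley's formula, the $k^{k-2}$ possible spanning trees, together with the requirement of no edges to the complement, gives, after a routine transfer to the Bernoulli model $G(n,p)$ with $p=m/\binom{n}{2}\sim\log n/n$, the estimate
$$\ex[Y_k]\le(1+o(1))\binom{n}{k}k^{k-2}p^{k-1}(1-p)^{k(n-k)}.$$
Splitting the range into small ($k\le\log^2 n$) and moderate ($\log^2 n<k\le n/2$) regimes and using Stirling-type bounds then shows $\sum_{k=2}^{n/2}\ex[Y_k]=o(1)$.

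The main technical obstacle is the moderate-$k$ portion of the second step: for small $k$ the factor $(1-p)^{k(n-k)}\sim n^{-k}$ easily kills the polynomial $\binom{n}{k}k^{k-2}p^{k-1}$, but for $k$ of order $n$ one must combine a sharper bound on the exponent with $\binom{n}{k}\le(en/k)^k$ and the small factor $p^{k-1}$ to obtain geometric decay in $k$. Once this estimate is in hand, everything combines cleanly: the Poisson limit $\prob[X=0]\to e^{-e^{-c}}$ together with the absence w.h.p.\ of intermediate-size components yields the theorem.
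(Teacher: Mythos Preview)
The paper does not prove this theorem at all: it is quoted as the classical 1959 result of Erd\H{o}s and R\'enyi, with a citation, and serves only as background and motivation for the higher-dimensional analogues that the paper actually establishes. So there is no ``paper's own proof'' to compare against.

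That said, your outline is the standard, correct argument. The factorial-moment computation for isolated vertices is right, and the first-moment bound on components of size $2\le k\le n/2$ is the usual one. One small remark: you invoke a ``routine transfer to the Bernoulli model $G(n,p)$'' for the second step but carry out the first step directly in $G(n,m)$; for a clean write-up you would either do both steps in $G(n,m)$ (the component-count bound works there too, with $\binom{\binom{n}{2}-k(n-k)}{m-k+1}/\binom{\binom{n}{2}}{m}$ in place of $p^{k-1}(1-p)^{k(n-k)}$) or appeal once to a transfer principle such as Proposition~1.13 in Janson--\L uczak--Ruci\'nski, which the paper itself cites for exactly this purpose. Either way the argument goes through.
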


Consequently, $\bar{m}=(n/2)\log n$ is a sharp threshold for connectedness, in the following sense.

\begin{theorem} \label{cor:ER}  Let $\omega \to \infty$ arbitrarily slowly. If 
$$m=\frac{n}{2}(\log n + \omega),$$
then w.h.p.\ $G(n,m)$ is connected, and if
$$m=\frac{n}{2}(\log n - \omega),$$
then w.h.p.\ $G(n,m)$ is disconnected.
\end{theorem}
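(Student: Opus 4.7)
The plan is to deduce Theorem \ref{cor:ER} directly from Theorem \ref{thm:ER} by exploiting the fact that connectedness is a monotone (increasing) graph property. The key ingredient is the standard coupling of the uniform models $G(n,m)$ and $G(n,m')$ for $m \le m'$: construct $G(n,m')$ from $G(n,m)$ by adjoining $m' - m$ additional edges chosen uniformly at random from the non-edges of $G(n,m)$. Since adding edges cannot disconnect a connected graph, this coupling immediately gives
\[
\prob[G(n,m) \text{ is connected}] \le \prob[G(n,m') \text{ is connected}] \quad \text{whenever } m \le m'.
\]

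For the first assertion, fix an arbitrary constant $c_0 \in \R$ and set $m_{c_0} = (n/2)(\log n + c_0)$. Because $\omega \to \infty$, for all sufficiently large $n$ we have $\omega \ge c_0$ and hence $m \ge m_{c_0}$. The monotonicity inequality above gives
\[
\prob[G(n,m) \text{ is connected}] \ge \prob[G(n,m_{c_0}) \text{ is connected}],
\]
and by Theorem \ref{thm:ER} the right-hand side tends to $e^{-e^{-c_0}}$. Thus $\liminf_n \prob[G(n,m) \text{ is connected}] \ge e^{-e^{-c_0}}$. Since $c_0$ was arbitrary and $e^{-e^{-c_0}} \to 1$ as $c_0 \to \infty$, the probability in question tends to $1$.

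The second assertion is symmetric. For any fixed $c_0 \in \R$, we have $-\omega \le c_0$ for all large $n$, so $m \le m_{c_0}$, and monotonicity together with Theorem \ref{thm:ER} gives $\limsup_n \prob[G(n,m) \text{ is connected}] \le e^{-e^{-c_0}}$. Letting $c_0 \to -\infty$ forces this bound to $0$, so $G(n,m)$ is disconnected w.h.p. The only step worth stating carefully is the coupling, which is routine; there is no real technical obstacle here, as all the substantive work has already been done in Theorem \ref{thm:ER}.
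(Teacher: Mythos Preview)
Your argument is correct and is exactly the standard deduction the paper has in mind: the paper does not give a separate proof of this statement, merely prefacing it with ``Consequently'' after Theorem~\ref{thm:ER}, and your monotonicity-plus-coupling argument is the routine way to unpack that word.
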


In 1969 Stepanov \cite{st} considered the 
Bernoulli counterpart $G(n,p)$, a random graph on $[n]$ such that a pair
$(i,j)$ forms an edge with probability $p$ independently of all other pairs.
He determined the threshold value $\bar p$ for connectedness of $G(n,p)$,
and  $\bar m \sim\bar p\binom{n}{2}$.  Informally,
this had to be expected because $G(n,m)$ is distributed as $G(n,p)$ 
{\it conditioned\/} on the number of edges being equal $m$, and for $\bar p$
the number of edges in $G(n,p)$ is sharply concentrated around its expected value, i.e.\
$\bar p\binom{n}{2}$.

\begin{theorem} [Stepanov] \label{thm:Step}
 If $$p =  \frac{\log{n} + c}{n},$$
where $c \in \R$ is constant, then w.h.p.\
$G(n,p)$ consists of a giant component and isolated vertices, and the number of isolated vertices is asymptotic to Poisson with mean $e^{-c}$.  In particular
  $$\prob [ G(n,p) \mbox{ is connected}\, ] \to e^{-e^{-c}},$$ 
as $n \to \infty.$
\end{theorem}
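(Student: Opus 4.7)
The plan is to prove Stepanov's theorem by Poisson-approximating the number of isolated vertices directly, and separately ruling out components of intermediate size. Let $X_0$ denote the number of isolated vertices of $G(n,p)$.

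First I would compute the factorial moments of $X_0$. Using $p=(\log n+c)/n$,
$$
\mathbb{E}[X_0]=n(1-p)^{n-1}=n\exp\!\bigl(-(n-1)p+O(np^2)\bigr)\to e^{-c}.
$$
More generally, for fixed $k\ge 1$, a set of $k$ distinct vertices is simultaneously isolated iff none of the $k(n-1)-\binom{k}{2}$ edges touching the set is present, so
$$
\mathbb{E}\bigl[(X_0)_k\bigr]=(n)_k\,(1-p)^{\,k(n-1)-\binom{k}{2}}\longrightarrow \bigl(e^{-c}\bigr)^k.
$$
The method of moments then gives $X_0 \Rightarrow \mathrm{Poisson}(e^{-c})$, which already yields the statement about the number of isolated vertices.

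Next I would rule out components of intermediate size. Let $Y_k$ denote the number of connected components of $G(n,p)$ on exactly $k$ vertices. A $k$-set forms a component iff some spanning tree on it is fully present and no edge leaves the set; by Cayley's formula and a union bound,
$$
\mathbb{E}[Y_k]\le \binom{n}{k}\,k^{k-2}\,p^{k-1}\,(1-p)^{k(n-k)}.
$$
With $p\sim (\log n)/n$ and $1-p\le e^{-p}$, I would split the range $2\le k\le n/2$ into a bounded regime and a linear regime, showing in each that $\sum_k\mathbb{E}[Y_k]=o(1)$. In the bounded regime the factor $(1-p)^{k(n-k)}\sim n^{-k}$ dominates the polynomial factors; in the linear regime one uses $\binom{n}{k}\le (en/k)^k$, with the combinatorial $(en/k)^k$ overwhelmed by the exponential coming from $(1-p)^{k(n-k)}$.

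Combining the two steps, with high probability every non-isolated vertex lies in a component of size greater than $n/2$. Two such components cannot coexist, so $G(n,p)$ decomposes into a unique giant component plus the $X_0$ isolated vertices. In particular,
$$
\Pr[G(n,p)\text{ is connected}]=\Pr[X_0=0]+o(1)\to\exp(-e^{-c}).
$$
The main obstacle will be the uniform control of $\sum_{k=2}^{n/2}\mathbb{E}[Y_k]$, especially in the transition between the small-$k$ and linear-$k$ regimes where neither the combinatorial factor nor the exponential factor leaves much slack; the Poisson step, by contrast, is routine.
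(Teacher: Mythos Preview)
Your argument is correct and is the classical direct proof. The paper itself does not give a dedicated proof of Theorem~\ref{thm:Step}: it is stated in the introduction as a known result of Stepanov, and after Theorem~\ref{thm:homconn,2} the paper remarks that it is recovered ``as a direct byproduct'' of Theorems~\ref{thm:conn} and~\ref{thm:homconn,2}. Your two ingredients---factorial moments for the isolated-vertex count and a first-moment bound ruling out components of size $2,\dots,\lfloor n/2\rfloor$---are precisely what the paper invokes (without details) inside its proof of Theorem~\ref{thm:conn,k}: the base quantity $g_1(\cdot\,;p)$ is dismissed there with the observation that the probability of an intermediate component in $G(n,p)$ is ``bounded by twice the expected number of components of size $2$,'' and the Poisson law for isolated faces is obtained ``by a standard argument based on factorial moments.'' So you have written out for $k=1$ exactly what the paper takes as understood.
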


Consequently, $\bar{p}=\log{n} / n$ is a sharp threshold for connectedness. 

\begin{theorem} \label{cor:Step}   Let $\omega \to \infty$ arbitrarily slowly. If 
$$p =  \frac{\log{n} + \omega}{n},$$
then w.h.p.\ $G(n,p)$ is connected, and if
$$p =  \frac{\log{n} - \omega}{n},$$
then w.h.p.\ $G(n,p)$ is disconnected.
\end{theorem}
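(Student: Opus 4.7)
The plan is to deduce Theorem~\ref{cor:Step} directly from Theorem~\ref{thm:Step} via the standard monotone-coupling / sandwich argument, using only that connectedness is an increasing graph property in $p$.

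First I would record the monotonicity: assigning i.i.d.\ uniform $[0,1]$ weights to the edges of $K_n$ and keeping those with weight at most $p$ realizes $G(n,p)$ and $G(n,p')$ simultaneously for every $p\le p'$, so that $G(n,p)\subseteq G(n,p')$ in this coupling. Consequently $\prob[G(n,p)\text{ connected}]$ is nondecreasing in $p$, and $\prob[G(n,p)\text{ disconnected}]$ is nonincreasing in $p$.

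For the supercritical half, take $p=(\log n+\omega)/n$ with $\omega\to\infty$, and fix an arbitrary constant $c\in\R$. Since eventually $\omega\ge c$, monotonicity and Theorem~\ref{thm:Step} give
$$
\liminf_{n\to\infty}\prob[G(n,p)\text{ connected}]\;\ge\;\lim_{n\to\infty}\prob\!\left[G(n,(\log n+c)/n)\text{ connected}\right]\;=\;e^{-e^{-c}}.
$$
Since $c$ was arbitrary and $e^{-e^{-c}}\to 1$ as $c\to\infty$, this liminf equals $1$. The subcritical half is handled symmetrically: with $p=(\log n-\omega)/n$ and any fixed $c>0$, eventually $p\le(\log n-c)/n$, so
$$
\liminf_{n\to\infty}\prob[G(n,p)\text{ disconnected}]\;\ge\;\lim_{n\to\infty}\prob\!\left[G(n,(\log n-c)/n)\text{ disconnected}\right]\;=\;1-e^{-e^{c}},
$$
and letting $c\to\infty$ concludes that $G(n,p)$ is w.h.p.\ disconnected.

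There is no genuine obstacle here: all of the probabilistic content of the sharp threshold already sits inside Theorem~\ref{thm:Step}, and the present deduction merely packages the ``liminf-first, then $c\to\infty$'' interchange that replaces the fixed constant $c$ by an arbitrarily slowly growing $\omega$. The only point worth being careful about is that one must take the limit in $n$ before the limit in $c$, which is what monotone coupling permits.
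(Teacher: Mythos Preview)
Your argument is correct. The paper does not give a separate proof of this statement; it simply presents it as a direct consequence of Theorem~\ref{thm:Step} (the word ``Consequently'' preceding the statement), and your monotone-coupling derivation is exactly the standard way to make that implication precise.
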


Nowadays, Theorem  \ref{thm:ER} and Theorem  \ref{thm:Step} may be viewed as essentially equivalent,
thanks to  general ``transfer''  theorems, e.g.\ Janson, \L uczak and Ruci\'nski \cite{jlr},
Propositions 1.12, 1.13.

An important advantage of the  Erd\H{o}s--R\'enyi random graph $G(n,m)$ is that it can be
gainfully viewed as a snapshot of a natural  random graph process $ \{ G(n,M) \}$, $0\le M
\le \binom{n}{2}$, at ``time'' $M=m$. Here $G(n,M)$ is obtained from $G(n,M-1)$ by selecting the
location of $M$-th edge uniformly at random among all $\binom{n}{2}-(M-1)$ still available
options. As a special case of a result of Bollob\'as and Thomasson \cite{bt},
we have

\begin{theorem}\label{BT} For almost all realizations of the $\{G(n,M)\}$  process, 
$$
\min\{M:\text{min degree of } G(n,M)>0\}=\min\{M:G(n,M)\text{ is connected}\}.
$$
\end{theorem}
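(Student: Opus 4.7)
The direction $M_1 \le M_2$ is immediate, since a connected graph on $n \ge 2$ vertices has no isolated vertex. For the reverse I would fix a slowly growing $\omega = \omega(n) \to \infty$ (say $\omega = \log\log n$) and set $M_\pm = \lfloor (n/2)(\log n \pm \omega)\rfloor$. By Theorem~\ref{cor:ER}, w.h.p.\ $G(n,M_-)$ is disconnected and $G(n,M_+)$ is connected, so w.h.p.\ both $M_1$ and $M_2$ lie in $[M_-, M_+]$. The core claim is then the \emph{uniform} structural statement: w.h.p., for \emph{every} $M \in [M_-, M_+]$, the graph $G(n,M)$ consists of one ``giant'' component together with some isolated vertices. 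Granted this, at time $M_1$ the last isolated vertex has just been merged into the giant, so $G(n,M_1)$ has a single component, hence is connected, and $M_2 \le M_1$.

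To establish the uniform structural claim I would use a first-moment argument. Call a vertex set $S$ with $|S|=k\in[2,n/2]$ \emph{dangerous} if it is a component of $G(n,M)$ for some $M\in[M_-,M_+]$. A dangerous $S$ necessarily satisfies both (i) $S$ is internally connected in $G(n,M_+)$, a monotone-increasing event, and (ii) no edge joins $S$ to $V\setminus S$ in $G(n,M_-)$, a monotone-decreasing event. Passing to the Bernoulli model $G(n,p_\pm)$ with $p_\pm=(\log n\pm\omega)/n$ via the transfer results of Janson--{\L}uczak--Ruci\'nski \cite{jlr} (Propositions 1.12--1.13) makes these two events independent, since (i) depends only on edges inside $S$ and (ii) only on edges between $S$ and its complement. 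The expected number of dangerous sets is then bounded by
\begin{equation*}
\sum_{k=2}^{n/2}\binom{n}{k}\,k^{k-2}\,p_+^{k-1}\,(1-p_-)^{k(n-k)},
\end{equation*}
where $k^{k-2}p_+^{k-1}$ is the Cayley spanning-tree union bound for (i) and $(1-p_-)^{k(n-k)}$ is (ii).

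This is essentially the same moment computation that underpins Theorem~\ref{thm:ER}. The $k=2$ term is of order $(\log n)e^{2\omega}/n$, which is $o(1)$ for $\omega=\log\log n$; the remaining terms decay rapidly in $k$, and in the range $k$ close to $n/2$ the bound $(1-p_-)^{k(n-k)}\le n^{-\Omega(k)}$ wipes the binomial coefficient out. The one genuinely technical point — which I expect to be the main obstacle — is verifying that this moment sum really is $o(1)$ uniformly across the entire range $2\le k\le n/2$: one pays a factor $e^{k\omega}$ for sliding from $p_-$ to $p_+$, and this must be absorbed by the $(1-p_-)^{k(n-k)}$ factor via the standard split into small, moderate, and large $k$. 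Once this estimate is in hand, Borel--Cantelli (or a simple union bound after the fact) converts the first-moment bound into the uniform structural statement, and the hitting-time equality follows.
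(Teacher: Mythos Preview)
The paper does not actually prove Theorem~\ref{BT}; it is quoted as a known result of Bollob\'as and Thomasson. What the paper \emph{does} prove is the $k$-dimensional extension, Theorem~\ref{BT,k}, and the argument there (Section~2, part~(2)) specializes to the graph case, so it makes sense to compare your approach against that.

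Your approach is correct in outline but takes a genuinely different route. You aim to establish the uniform structural statement --- every $G(n,M)$ in the window is giant-plus-isolated --- \emph{directly}, by a first-moment bound on ``dangerous'' sets that could be a nontrivial small component at some time in the window, coupling the internal-connectivity requirement at $M_+$ with the boundary-isolation requirement at $M_-$. The paper instead establishes the structural statement only at the \emph{single} lower endpoint $t_1$ (via the bound on $\prob(A_n^c)$ in part~(1)), and then argues that this structure is \emph{maintained} throughout the window: since components can only merge as edges are added, the only way the giant-plus-isolated picture could break is if two currently isolated vertices are joined directly to each other, and a one-line integral bounds the probability of that happening anywhere in $[t_1,t_2]$. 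This endpoint-plus-maintenance argument is cleaner, since it avoids the sum over all component sizes $2\le k\le n/2$ and does not require coupling events at two different times; your approach, on the other hand, is more self-contained and does not rely on a separate structural result at a fixed time.

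One technical point in your sketch needs more care: the transfer results you cite (Propositions 1.12--1.13 in \cite{jlr}) compare a single monotone property in $G(n,m)$ with $G(n,p)$; they do not directly handle a \emph{joint} event involving both $G(n,M_-)$ and $G(n,M_+)$ in the coupled process. The cleanest fix is exactly what the paper does for Theorem~\ref{BT,k}: embed $\{G(n,M)\}$ into the continuous-time process with independent exponential edge-birth times, so that your events (i) and (ii), living on disjoint edge sets at deterministic times $t_\pm$, become genuinely independent. Also, ``Borel--Cantelli'' is not the right phrase at the end --- you are working at a single $n$, so Markov's inequality on the expected count already gives the w.h.p.\ conclusion.
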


In a seminal paper \cite{lm}  Linial and Meshulam defined random $2$-dimensional simplicial complexes and found a two-dimensional cohomological analogue of Theorem \ref{cor:Step}.  
Subsequently Meshulam and Wallach \cite{mw} managed to extend the result of \cite{lm} to all
dimensions $k\ge 2$. These papers  have inspired  several other articles exploring the topology of random simplicial complexes, e.g.\ see Aronshtam et al \cite{al}, Babson et al.\ \cite{bh}, Bollob\'as and Riordan \cite{br}, and Kozlov \cite{ko}.

Our main goal in this article is to establish some $k$-dimensional analogues of Theorems \ref{thm:Step} and \ref{BT}, based on,  or inspired by,  the Linial--Meshulam and Meshulam--Wallach theorems.

\subsection{Topological preliminaries}

 In this subsection we define simplicial complexes and simplicial cohomology with $(\Z / 2)$-coefficients.  For a more complete introduction we refer the reader to the first two chapters of Hatcher's book \cite{Hatcher}.

An abstract {\it simplicial complex} is a finite set $V$, called the {\it vertices} of $S$ and a collection $S$ of subsets of $V$ such that
\begin{itemize}
\item $\{ v \} \in S$ for every $v \in V$,
\item  if $A \in S$ and $B \subset A$ is nonempty then $B \in S$.
\end{itemize}

Elements $\{ x,y\} \in S$ of cardinality $2$ are sometimes called {\it edges}, and elements $\{ x,y,z \}$ of cardinality $3$ {\it triangles}.  In general, elements of $S$ are called {\it faces}.

The {\it dimension} of a face $f \in S$ is $|f|-1$, where $|f|$ denotes the cardinality of $f$.  (So vertices are $0$-dimensional, edges are $1$-dimensional, etc.)  The dimension of  $S$ is the maximum cardinality of its faces.  Note that a simplicial complex of dimension $1$ corresponds to a simple graph  (i.e.\ a graph with no loops or multiple edges). 

One may also consider the {\it geometric realization} of $S$, sometimes denoted $|S|$, as a topological space.  We will abuse notation and identify $S$ with $|S|$.  In practice, it is clear whether one is talking about a combinatorial feature of $S$ or a topological feature.

Let $F_k=F_k(S)$ denote the set of $k$-dimensional faces of $S$.  Then the {\it $k$-cochains} $C^k$ is the vector space of functions $f: F_k \to \Z /2$.  There is a {\it coboundary map} $d_k: C^k \to C^{k+1}$, defined by
$$d_k f (\sigma) = \sum_{\tau} f(\tau),$$
where the sum is over all faces $\tau \subset \sigma$ such that $\dim \tau = \dim \sigma -1=k$.
If we introduce an $|F_k|\times |F_{k+1}|$ incidence matrix $I_k$ such that $I_k(\tau,\sigma)=$
if and only if $\tau\subset\sigma$, and view $f:=\{f(\tau)\}$, $g:=\{d_k f(\sigma)\}$ as vectors, then $g^T=f^T I_k$.

The $k$-cocycles is defined to be the subspace $Z^k = \ker d_k$, i.e.\  the left null-space
of $I_k$,  and the $k$-coboundaries is the subspace $B^k = \mbox{im} \, d_{k-1}$, i.e.\ 
the row space of $I_{k-1}$.  For each $f\in B^k$ there exists $A\subset F_{k-1}$ such that
$f$ is supported by the faces $\beta\in F_k(S)$ with a property:  $\beta$ has an odd number of $(k-1)$-faces
$\alpha\in F_{k-1}$. In particular, for $k=2$, $f$ is supported by the cut/set separating $A$ and
$A^c=[n]\setminus A$. 

It is easy to verify that $B^k \subseteq Z^k$, i.e.\ $d_k \circ d_{k-1} = 0$ for every $k$, or equivalently $I_{k-1}I_k=0$; indeed, given $\alpha\in F_{k-1},\,
\beta\in F_{k+1}$, $\alpha$ is either a face of exactly two $k$-faces of $\beta$, or not a face
of any $k$-face of $\beta$.

Then the $k$th cohomology is defined to be the quotient vector space $H^k = Z^k / B^k$.  (We might write $H^k(S, \Z/2)$ to emphasize that this is the cohomology for the simplicial complex $S$, and that we mean cohomology with $\Z / 2$ coefficients.) We are especially interested in the case $H^k =  0$, which means that every $k$-cocycle is a $k$-coboundary.

If we view $\emptyset$ as $(-1)$-dimensional face, this is sometimes called {\it reduced} cohomology, and denoted by $\widetilde{H}^k$ rather than $H^k$. Note that $\widetilde{H}^k = H^k$ except in the case $k=0$.  In the reduced cohomology case $I_0$ is a single row with all entries $1$, while $I_1$ is a vertex-edge incidence matrix of a simple graph $G$ on $[n]$.
Consequently $\widetilde{H}^0=0$ if and only if $G$ is connected. In general, the dimension of $\widetilde{H}^0$ is
$c(G)-1$, where $c(G)$ is the number of connected components of $G$.

A topological aside:  One may just as easily talk about {\it homology} $H_k$ rather than cohomology $H^k$, but in the cases we are interested in the results would be exactly the same.  (It is pointed out in \cite{lm} that this equivalence follows from universal coefficients.) However the main argument seems to be easier to make in terms of cohomology than in terms of homology.

\subsection{The Linial--Meshulam and Meshulam--Wallach theorems}

The random $k$-dimensional simplicial complex $Y \sim Y(n,p)$ has vertices $[n] = \{ 1, 2, \dots, n\}$, and complete $(k-1)$-skeleton, meaning that $Y$ contains all subsets of $[n]$ of
cardinality $k$. Then each $k$-face (subset of cardinality $k+1$)  is included in $Y$ with probability $p$, independently of all other such faces.  The following is a cohomological analogue of Theorem \ref{thm:ER},
$k=2$ and $k\ge 2$ versions being proved by Linial and Meshulam \cite{lm} and by Meshulam and Wallach 
\cite{mw} respectively.

\begin{theorem}\label{thm:LM}  Let $\omega \to \infty$ arbitrarily slowly and $Y \sim Y(n,p)$.  
\begin{enumerate}
\item If $$p \ge  \frac{k\log{n} + \omega}{n},$$
then  $$\prob [ H^{k-1}(Y, \Z /2) = 0] \to 1;$$ 
\item if $$p \le  \frac{k\log{n} - \omega}{n},$$ then
  $$\prob [ H^{k-1}(Y, \Z/2) = 0] \to 0,$$ as $n \to \infty$.
\end{enumerate}
\end{theorem}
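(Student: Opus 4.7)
The plan is to treat the two directions separately. Part (2), the non-vanishing direction, is a straightforward first-and-second-moment calculation on isolated $(k-1)$-faces. Part (1), the vanishing direction, is the content of the Linial--Meshulam/Meshulam--Wallach work and proceeds via the minimal cocycle method together with a combinatorial isoperimetric lemma.

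For part (2), I would call a $(k-1)$-face $\sigma$ \emph{isolated} if $Y$ contains no $k$-face $\tau\supset\sigma$, and let $X$ count such faces. Each $(k-1)$-face sits in exactly $n-k$ potential $k$-faces, each present independently with probability $p$, so $\ex[X]=\binom{n}{k}(1-p)^{n-k}\to\infty$ when $pn\le k\log n-\omega$. Two distinct $(k-1)$-faces have at most one potential $k$-face in common (only when they meet in $k-1$ vertices), so a routine covariance calculation gives $\var X=o(\ex[X]^2)$, hence $X\ge 1$ w.h.p.\ by Chebyshev. For each isolated $\sigma$ the indicator $\mathbf 1_\sigma$ is a cocycle (every $k$-face of $Y$ misses $\sigma$) but not a coboundary: completeness of the $(k-1)$-skeleton forces every nonzero element of $B^{k-1}$ to have support of size at least $\binom{n-1}{k-1}$ (for $k=2$ this is simply the observation that every nonempty cut in $K_n$ has $\ge n-1$ edges). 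Thus $H^{k-1}(Y,\Z/2)\ne 0$ w.h.p.

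For part (1), suppose $H^{k-1}(Y,\Z/2)\ne 0$ and pick a representative cocycle $f$ whose support $\Sigma\subseteq\binom{[n]}{k}$ has minimum size in its cohomology class. Testing $f$ against the elementary coboundary $d_{k-2}\mathbf 1_\eta$ for each $(k-2)$-face $\eta$ yields the local density constraint that at most $(n-k+1)/2$ of the $n-k+1$ $(k-1)$-faces containing $\eta$ lie in $\Sigma$. The Meshulam--Wallach isoperimetric lemma then asserts that any nonempty $\Sigma$ meeting this constraint satisfies $|\Sigma|\ge\binom{n-1}{k-1}$ and that the number $T(\Sigma)$ of $(k+1)$-subsets $\tau\subseteq[n]$ whose $k$-subsets meet $\Sigma$ in an odd number of elements satisfies $T(\Sigma)\ge c_k\,n\,|\Sigma|$ for some $c_k>0$. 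For fixed $\Sigma$, the probability that $\mathbf 1_\Sigma$ is a cocycle in $Y$ equals $(1-p)^{T(\Sigma)}$, since every such ``odd'' $k$-face must be absent from $Y$. A union bound
\[
\prob[H^{k-1}(Y,\Z/2)\ne 0]\le\sum_{s\ge \binom{n-1}{k-1}}\binom{\binom{n}{k}}{s}\exp\bigl(-p c_k n s\bigr)
\]
then collapses to $o(1)$: the entropy $\binom{\binom{n}{k}}{s}\le (en^k/s)^s$ is crushed term-by-term by $\exp(-p c_k n s)\le n^{-c_k k s}$ once $pn\ge k\log n+\omega$, with room to spare.

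The main obstacle is the Meshulam--Wallach combinatorial isoperimetric lemma itself, which extracts from the purely local minimality constraint the global lower bound $T(\Sigma)\gtrsim n|\Sigma|$. For $k=2$ this is essentially an edge-isoperimetric estimate in $K_n$: each edge of $\Sigma$ together with an external vertex gives a triangle, and the degree cap from minimality prevents excessive cancellation among ``even'' triangles. For $k\ge 3$ it is the technical core of \cite{mw}, established by a delicate induction on the link-degree sequence of $(k-2)$-faces in $\Sigma$, and one either invokes it as a black box or reproves it.
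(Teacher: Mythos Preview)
Your treatment of part (2) is fine and matches the paper's informal discussion. The gap is in part (1). The assertion that a minimal cocycle representative must have $|\Sigma|\ge\binom{n-1}{k-1}$ is false: a single $(k-1)$-face gives $|\Sigma|=1$ and trivially satisfies the local maxdegree constraint you derived. (You seem to be conflating this with a bound on the support of a nonzero \emph{coboundary}; even that is only $n-k+1$ for $k\ge 3$.) So your union bound has to run over all $s\ge 1$. Worse, the bound does not close even starting at $s\sim n^{k-1}$: the Meshulam--Wallach constant is $c_k=1/(k+1)$, so your generic term is at most $(en^k/s)^s\exp(-pns/(k+1))\le (e\,n^{k^2/(k+1)}/s)^s$, and since $k^2/(k+1)=k-1+1/(k+1)>k-1$ this is $(\Theta(n^{1/(k+1)}))^s\to\infty$ at $s\sim n^{k-1}$. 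The inequality $T(\Sigma)\ge n|\Sigma|/(k+1)$ alone is far too weak to beat the crude entropy $\binom{\binom{n}{k}}{s}$.

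The real argument, both in \cite{lm}, \cite{mw} and in Sections 3 and 5 here, never uses $\binom{\binom{n}{k}}{s}$. It splits by the range $\nu$ of the support. For small $\nu$ one uses the much stronger and elementary bound $X(H)\ge (n-\nu)m$ together with the count $\binom{n}{\nu}\binom{\binom{\nu}{k}}{m}$ and the single-component constraint $m\ge \nu-k+1$; this is what kills the small-$s$ terms. For $\nu\ge an$ (hence $m$ linear in $n$) one needs the deep estimate \eqref{remark} of \cite{mw}, whose proof is a structural dichotomy rather than a bare union bound. In this paper Theorem \ref{thm:LM} is actually recovered as a corollary of the sharper Poisson limit, Theorem \ref{thm:homconn,2}, whose proof carries out exactly this refined counting for $|x_n|=o(\log n)$.
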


(In \cite{mw} the same statement is shown to hold, even when $\Z / 2$ is replaced by any finite abelian group of coefficients.)

Part (i) is the heart of Theorem \ref{thm:LM}, as part (ii) is relatively straightforward. Indeed, for $p\le (k \log n -\omega)/n$,
w.h.p.\ at least one $\alpha\in F_{k-1}$ is not a face of any $k$-face in $Y(n,p)$, i.e.\ $\alpha$
is {\it isolated\/}.
The characteristic function $f$ of such an $\alpha$ then is a cocycle, by default,  but it is not a coboundary; indeed, for  the $k$-faces $\sigma\notin F_k(Y(n,p))$,  such that
$\alpha\subset\sigma$,
$$
\sum_{\tau\subset \sigma}f(\tau)=1\neq 0.
$$
On the other hand, one sees that, for $p\ge n^{-1}(k\log n+\omega)$, 
w.h.p.\ there are no isolated $(k-1)$-faces. 
This suggests that isolated $(k-1)$-faces might hopefully be the {\it most\/}  likely obstruction to cohomological connectedness of $Y(n,p)$ if $p\ge n^{-1}(k\log n+w)$. That was exactly the motivation behind the statement and the proof of the key part (i) in \cite{lm}, \cite{mw}. In fact, 
we shall see that, in a closer analogy with $G(n,p)$, isolated $(k-1)$-faces are  the {\it only\/} likely obstruction to such connectedness even ``earlier'', when $p=n^{-1}(k\log n +O(1))$.

\subsection{Notions of connectivity} 

Linial and Meshulam introduced the terminology ``(co)homological connectedness'' to emphasize that Theorem \ref{thm:LM} should be viewed as a $2$-dimensional analogue of the Erd\H{o}s--R\'enyi theorem.  Spaces where every (co)cycle is a (co)boundary are also sometimes called ``acyclic'', or to have ``vanishing (co)homology.''

We call a $k$-dimensional simplicial complex $S$  {\it hypergraph connected} if for every two
$(k-1)$-faces $\alpha$, $\alpha^\prime \in F_{k-1}(S)$, there exists a sequence of $(k-1)$-faces
$$\alpha=\alpha_1,\alpha_2,\dots,\alpha_k=\alpha^\prime$$ that joins $\alpha$ and 
$\alpha^\prime$, in a sense that for $i$, $\alpha_i\cup
\alpha_{i+1}\in F_k(S)$.

\begin{theorem} \label{thm:conn} Let $S$ be a $k$-dimensional complex with complete
$(k-1)$-skeleton (i.e.\ $|F_{k-1}(S)| = {n \choose k}$). If $H^{k-1}(S,\Z/2)=0$, then $S$ is hypergraph connected.
\end{theorem}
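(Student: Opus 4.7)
I would prove the contrapositive: assuming $S$ is not hypergraph connected, construct an element of $C^{k-1}(S,\mathbb{Z}/2)$ that is a cocycle but not a coboundary. The key structural input is the following. For any $\sigma\in F_k(S)$, any two of its $(k-1)$-subfaces $\tau,\tau'$ satisfy $\tau\cup\tau'=\sigma\in F_k(S)$, so they are hypergraph-adjacent; consequently the $k+1$ $(k-1)$-subfaces of $\sigma$ all lie in a single connected component of the hypergraph-adjacency graph $G_S$ on $F_{k-1}(S)$. If $G_S$ has two or more components $A_1,\dots,A_c$, the cocycle condition decouples: for each $\sigma\in F_k$, the linear relation $\sum_{\tau\subset\sigma}f(\tau)=0$ involves only values of $f$ within one component, so the incidence matrix of $d_{k-1}$ is block diagonal over this partition.

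\textbf{Construction of the cocycle.} Let $A\subsetneq F_{k-1}(S)$ be a proper nonempty union of components of $G_S$. When $k$ is odd I would take $f=\chi_A$: by the decoupling, $d_{k-1}\chi_A(\sigma)$ equals $(k+1)\chi_A(\tau_\sigma)$ for any subface $\tau_\sigma$ of $\sigma$, and since $k+1$ is even this vanishes, so $\chi_A$ is a cocycle. When $k$ is even, $\chi_A$ is no longer a cocycle, and I would instead produce $f$ supported on $A$ by finding a nonzero element of the local kernel of $d_{k-1}$ acting on $\mathbb{Z}/2^{A}$: such an element exists in the easy case when $A$ contains an isolated $(k-1)$-face (its indicator is vacuously a cocycle), and in general via a rank count on the local incidence matrix $M_A$, using that its rows have odd weight $k+1$ while $G_S|_A$ is connected.

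\textbf{Non-coboundary, the main obstacle.} The substantive work is to show this cocycle is not a coboundary. Suppose $f=d_{k-2}g$ for some $g\in C^{k-2}$, so that for each $\alpha\in F_{k-1}$,
$$
f(\alpha)=\sum_{v\in\alpha}g(\alpha\setminus\{v\}).
$$
By summing this identity over carefully chosen families of $\alpha$'s that straddle $A$ and its complement, and using that the complete $(k-1)$-skeleton forces every $(k-2)$-face to appear with full symmetry, the $g$-terms should cancel in pairs modulo $2$, while the left-hand side registers a parity imbalance coming from the indicator structure of $f$. The completeness of the $(k-1)$-skeleton is essential here: it is what pins down $B^{k-1}$ as a \emph{fixed} subspace of $C^{k-1}$, depending only on $n$ and $k$, so that the combinatorial disconnection of $G_S$ can be translated into a cohomological obstruction. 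I expect the careful bookkeeping of this cancellation, especially in the $k$ even subcase, to be the most delicate part of the proof.
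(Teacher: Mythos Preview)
Your route is genuinely different from the paper's --- the paper argues by induction on $k$ through vertex links: it shows that $H^{k-1}(S)=0$ forces $H^{k-2}(\link_S(v))=0$ for every vertex $v$, applies the inductive hypothesis to get hypergraph connectedness of each link, and then stitches paths together. Your contrapositive/direct-construction approach is a reasonable instinct, but as written it has real gaps.

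\textbf{The cocycle for $k$ even is not established.} Your ``rank count using that rows have odd weight $k+1$'' does not by itself force $\ker(d_{k-1}|_A)\neq 0$; odd row-weight tells you $\mathbf 1_A$ is \emph{not} in the kernel, which is the wrong direction. You would need an actual inequality between $|A|$ and the number of $k$-faces with all subfaces in $A$, and none is given.

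\textbf{The ``not a coboundary'' step is the crux, and it is not argued.} The key fact (since the $(k-1)$-skeleton is complete and the full simplex has vanishing $H^{k-1}$) is that $f\in B^{k-1}$ iff $d_{k-1}f(\sigma)=0$ for \emph{every} $\sigma\in\binom{[n]}{k+1}$, not just those in $F_k(S)$. For $k$ odd and $f=\chi_A$, this means you must exhibit some $(k+1)$-set with an odd number of $k$-subsets in $A$. Your ``parity imbalance over carefully chosen families'' is a hope, not an argument. Worse, the specific choice $f=\chi_A$ can fail: fix $\beta\in F_{k-2}$ and let $S$ contain no $k$-face through $\beta$; then every $\alpha\supset\beta$ is isolated, so the set $A_0=\{\alpha\in F_{k-1}:\beta\subset\alpha\}$ is a union of (singleton) components, yet $\chi_{A_0}=d_{k-2}(e_\beta)$ is a coboundary. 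So at minimum you must explain \emph{which} $A$ to pick, and why that choice avoids $B^{k-1}$.

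In short, the block-diagonal observation is correct and useful, but neither the existence of the cocycle (for even $k$) nor its nontriviality (for all $k$) is proved. The paper's link-induction sidesteps both issues entirely.
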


\begin{proof}[Proof of Theorem \ref{thm:conn}] We use induction on $k$. The statement obviously holds for $k=1$. Suppose it is true for some $k\ge 1$. Let $S$ be a $(k+1)$-dimensional
complex such that $H^k(S,\Z/2)=0$. Define the link
of a vertex $v\in [n]$ by
$$
\link_{S}(v) = \{ \sigma - v : \sigma\in S,\,v \in \sigma \}.$$
Note that $\link_{S}(v)$ is itself a simplicial complex, and 
$$
\dim(\link_{S}(v)) \le \dim( S) - 1.
$$ 
Then $H^{k-1}(\link_{S}(v))=0$ for all $v\in [n]$. If not, there is  $v^*$ such that
$H^{k-1}(\link_{S}(v^*))$ $\neq 0$. Thus there exists a cocycle $g:F_{k-1}(\link_{S}(v^*))\to\{0,1\}$ which is not a coboundary, the latter meaning that for some $k$-face $\sigma$, that does not contain $v^*$,
$$
\sum_{\tau\in \sigma}g(\tau)=1.
$$
Define $f:F_k(S)\to\{0,1\}$ by the conditions
\si
(a)  $f((v^*,\tau))=g(\tau)$ for $\tau\in F_{k-1}(\link_{S}(v^*))$; 
\si
(b) $f(\alpha)=0$ for all $k$-faces $\alpha\neq (v^*,\tau)$, with $\tau\in F_{k-1}(\link_{S}(v^*))$. 
\si
Then $f$ is a cocycle of $S$, but $f$ is not a coboundary, because
$$
\sum_{\alpha\in (v^*,\sigma)}f(\alpha)=\sum_{\tau\in \sigma}g(\tau)=1.
$$
Contradiction! So indeed $H^{k-1}(\link_{S}(v))=0$ for all $v\in [n]$. 
By induction hypothesis, each $\link_S(v)$ is hypergraph connected, so that in $\link_S(v)$ every two $(k-1)$-faces are joined by a path of $(k-1)$-faces.

It remains to show that $S$ itself is hypergraph connected. Let $\alpha,\,\alpha^\prime\in F_k(S)$.
Define $t=|\alpha\cap\alpha^\prime|$. If $t>0$, then $\alpha =(v_1,v_2,\dots,v_k)$,
$\alpha^\prime=(v_1,v_2^\prime,\dots,v_k)$. Since $\link_S(v_1)$ is hypergraph connected, $(v_2,\dots,v_k)$ and $(v_2^\prime,\dots,v_k^\prime)$ are joined by a path of
$(k-1)$-faces in $\link_S(v_1)$. Augmenting these intermediate faces with $v_1$ we obtain
a path joining $\alpha$ and $\alpha^\prime$ in $S$. Suppose $t=0$, so that
$(v_1,\dots,v_k)$ and $(v_1^\prime,\dots,v_k^\prime)$ do not overlap. By the preceding
argument, $(v_1,v_2,\dots,v_k)$ and $(v_1,v_2^\prime,\dots,v_k^\prime)$ are joined by a path in $S$,
and so are  $(v_1,v_2^\prime,\dots,v_k^\prime)$ and $(v_1^\prime,v_2^\prime,\dots,v_k^\prime)$.
Concatenating these two paths we get a path from $(v_1,\dots,v_k)$ to $(v_1^\prime,\dots,
v_k^\prime)$ in $S$. 
\end{proof}

In light of Theorem \ref{thm:conn}, Theorem \ref{thm:LM} effectively shows that $\bar{p}= n^{-1}k 
\log n$ is both the threshold for $H_{k-1}(Y, \Z/2) = 0$ and  the threshold for the hypergraph connectedness of the underlying hypergraph.

\subsection{Main results} Our first result is a hypergraph analogue of Theorem
\ref{thm:Step}. Let $k\ge 1$. Let $HG(n,p)$ denote the random hypergraph induced by the random
complex $Y(n,p)$. The hypervertex set and the hyperedge set of $HG(n,p)$ are
$$F_{k-1}=\binom{[n]}{[k]}$$ and $$F_k(Y(n,p))\subseteq F_k=\binom{[n]}{[k+1]},$$ respectively.
\begin{theorem}\label{thm:conn,k} If
$$
p=\frac{k\log n +c}{n},
$$
where $c\in \Bbb R$ is constant, then w.h.p.\ $HG(n,p)$ consists of a giant component
and isolated vertices, and the number of those is asymptotically Poisson, with mean
$e^{-c}/k!$, and hence the probability of hypergraph connectedness approaches $\exp( e^{-c} / k!)$ as $n \to \infty$.
Consequently, $p=n^{-1} k \log n$ is a sharp threshold probability for connectedness property of
$HG(n,p)$.
\end{theorem}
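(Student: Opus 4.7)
My plan is to prove Theorem \ref{thm:conn,k} in two steps. Step 1 is a Poisson limit for the count $X_n$ of isolated $(k-1)$-faces in $Y(n,p)$; Step 2 shows that w.h.p.\ every non-isolated $(k-1)$-face lies in a single giant hypergraph component. Together these yield the claimed giant-plus-isolates structure, and since hypergraph connectedness on the good event is equivalent to $X_n=0$, the connectedness probability tends to $\exp(-\lambda)$ with $\lambda = e^{-c}/k!$.

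\textbf{Step 1 (Poisson limit).} A direct computation gives
\[
\ex[X_n] = \binom{n}{k}(1-p)^{n-k} \to \frac{e^{-c}}{k!} =: \lambda,
\]
using $pn = k\log n + c$ and $(1-p)^{n-k} = n^{-k}e^{-c}(1+o(1))$. To upgrade this to a Poisson limit I apply the method of factorial moments:
\[
\ex[(X_n)_r] = \sum_{(\alpha_1,\ldots,\alpha_r)\text{ distinct}} (1-p)^{|B(\alpha_1,\ldots,\alpha_r)|},
\]
where $B(\alpha_1,\ldots,\alpha_r) = \bigcup_i\{\sigma\in F_k:\alpha_i\subset\sigma\}$. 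Distinct $\alpha_i,\alpha_j$ share at most one potential hyperedge (exactly when $|\alpha_i\cap\alpha_j|=k-1$), so $|B|=r(n-k)$ for all but a fraction $O(n^{1-k})$ of tuples. Hence $\ex[(X_n)_r]\sim\binom{n}{k}^r(1-p)^{r(n-k)}\to\lambda^r$, giving $X_n\Rightarrow\mathrm{Poisson}(\lambda)$.

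\textbf{Step 2 (giant-plus-isolates).} Let $N_s$ denote the number of hypergraph components of $HG(n,p)$ containing exactly $s$ hypervertices. Any component with at least one hyperedge has $s\ge k+1$ hypervertices, so it suffices to establish
\[
\sum_{s=k+1}^{\binom{n}{k}/2} s\,\ex[N_s] = o(1);
\]
by Markov this forces every non-isolated $(k-1)$-face into a unique component of size $>\binom{n}{k}/2$ w.h.p. For a candidate set $T\subset F_{k-1}$ with $|T|=s$, the internal and boundary potential $k$-faces are disjoint, so
\[
\prob[T\text{ is a component}] \le (1-p)^{|\partial T|},
\]
where $\partial T$ consists of $k$-faces with at least one but not all $(k-1)$-subfaces in $T$. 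I plan to split the sum: for bounded or mildly growing $s$, enumerate connected configurations via a spanning sequence of hyperedges and use the elementary bound $|\partial T|\ge s(n-k)-O(s^2)$; for larger $s$ up to $\binom{n}{k}/2$, invoke an isoperimetric lower bound of the form $|\partial T|\gtrsim s(n-k)/k$ to defeat the entropic count of candidate $T$.

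The main obstacle is obtaining a sharp enough isoperimetric estimate on $|\partial T|$ uniformly across the intermediate range of $s$. For bounded $s$ the first-moment computation is routine, and for $s$ of order $\binom{n}{k}$ the boundary is so large that $(1-p)^{|\partial T|}$ easily overcomes any entropy; the middle regime echoes the cocycle--coboundary estimates underlying Meshulam--Wallach, applied at $p=(k\log n+c)/n$ rather than $p=(k\log n+\omega)/n$. Since the two thresholds differ only by an additive $O(1/n)$ in $p$, a careful rerun of those estimates with constants tracked explicitly should close the gap and complete the proof.
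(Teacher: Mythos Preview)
Your Step 1 is fine and matches the paper's treatment of the Poisson limit via factorial moments.

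Your Step 2, however, takes a route very different from the paper's, and it has a genuine gap. You propose a direct first-moment bound on components of size $s$, relying on an isoperimetric lower bound $|\partial T|\gtrsim s(n-k)/k$ for arbitrary $T\subset F_{k-1}$ with $|T|\le\tfrac12\binom{n}{k}$. You acknowledge this bound as the ``main obstacle'' and suggest it follows by ``rerunning'' the Meshulam--Wallach estimates with constants tracked. But the Meshulam--Wallach inequality $X(H)\ge n|E(H)|/(k+1)$ is proved for supports of \emph{minimal nontrivial cocycles}, which carry extra structure (bounded maximum degree, the coboundary condition) that an arbitrary component $T$ of the hypergraph need not have. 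There is no reason the same argument transfers, and you have not supplied an alternative. So as written, the middle range of $s$ is not handled, and the proof is incomplete.

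The paper sidesteps this difficulty entirely with a short inductive argument on $k$ via vertex links. Writing $A_n$ for the event that all non-isolated $(k-1)$-faces lie in one component, it observes that $\mathrm{lk}_Y(v)$ is distributed as $Y_{k-1}(n-1,p)$ and shows
\[
A_n\supseteq\Bigl(\bigcap_{v\in[n]}A_n(v)\Bigr)\cap B_n,
\]
where $A_n(v)$ is the analogous event in the link and $B_n$ is an elementary ``bridging'' event with $\Bbb P(B_n^c)\le a\,n^{2t}q^{nt^2}$. This yields the recursion $g_t(n;p)\le n\,g_{t-1}(n-1;p)+h_t(n;p)$, which after $k-1$ iterations reduces the problem to the classical $G(n,p)$ case $t=1$. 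No isoperimetric inequality for sets of $(k-1)$-faces is needed at all; the only nontrivial input is the Erd\H{o}s--R\'enyi/Stepanov result for ordinary graphs. This is what the paper means by ``an unexpectedly simple proof\ldots using links and induction on $k$,'' and it is both shorter and more robust than the component-counting strategy you outline.
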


A key estimate in an unexpectedly simple proof is obtained by using links and induction on $k$.\\

Analogously to $\{G(n,M)\}$, let us introduce the random complex process \linebreak $\{Y(n,M)\}$,
where $Y(n,M)$ is a uniformly random $k$-dimensional complex with $M$ $k$-faces. Each $Y(n,M)$ is obtained from $Y(n,M-1)$ by choosing the location of $M$-th $k$-face uniformly at random
among all $\binom{n}{k+1}-(M-1)$ possibilities. Here is a $k$-dimensional extension of
Bollob\'as-Thomasson's Theorem \ref{BT}.
\begin{theorem}\label{BT,k} For almost all realizations of the $k$-dimensional process $\{Y(n,M)\}$, the
random $k$-face, that eliminates the chronologically last isolated $(k-1)$-face, also
makes the resulting complex hypergraph connected.
\end{theorem}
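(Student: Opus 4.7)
The plan is to adapt the Bollob\'as--Thomasson argument for graphs. Introduce the hitting times
\[
M_1 = \min\bigl\{M : Y(n,M) \text{ has no isolated } (k-1)\text{-face}\bigr\}, \quad M_2 = \min\bigl\{M : Y(n,M) \text{ is hypergraph connected}\bigr\}.
\]
Every isolated $(k-1)$-face is a singleton component of $HG(n,M)$, so $M_1 \le M_2$ deterministically; the claim is that equality holds with high probability, which means that the $k$-face inserted at time $M_1$ simultaneously covers the last isolated $(k-1)$-face and attaches it to the already-established giant component. First, using Theorem~\ref{thm:conn,k} and the standard transfer between $Y(n,p)$ and $Y(n,M)$ (as in \cite{jlr}), I would localize $M_1$ inside the narrow window
\[
I = \biggl[\binom{n}{k+1}\cdot\frac{k\log n - \omega}{n},\; \binom{n}{k+1}\cdot\frac{k\log n + \omega}{n}\biggr],
\]
where $\omega = \omega(n) \to \infty$ arbitrarily slowly, with probability $1 - o(1)$.

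The heart of the argument is to rule out non-singleton separating sets throughout $I$. Call $A \subset F_{k-1}$ \emph{separating} in $Y(n,M)$ if no $k$-face of $Y(n,M)$ has $(k-1)$-faces in both $A$ and $A^c$. Singletons correspond exactly to isolated $(k-1)$-faces; since every hyperedge of $HG$ carries $k+1$ hypervertices, any non-singleton hypergraph component has size at least $k+1$. I would use a first-moment estimate to show that, with probability $1 - o(1)$, for \emph{every} $M \in I$ there is no separating set $A$ with $k+1 \le |A| \le \binom{n}{k}/2$ in $Y(n,M)$. For such an $A$, the number of potential crossing $k$-faces $e(A)$ satisfies a Kruskal--Katona/edge-isoperimetric lower bound of the form $e(A) \ge c_k \cdot |A| \cdot (n-k)$ for $|A|$ small, with progressively stronger bounds as $|A|$ grows toward $\binom{n}{k}/2$. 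The probability that none of these $e(A)$ crossing $k$-faces appears in $Y(n,M)$ is at most $(1 - M/\binom{n}{k+1})^{e(A)}$. Multiplying by $\binom{\binom{n}{k}}{s}$, summing over $s = |A| \in [k+1, \binom{n}{k}/2]$, and taking a union bound over the $O(\omega \binom{n}{k+1}/n)$ integer values of $M$ in $I$ should yield total expected count $o(1)$.

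On the intersection of the good events of the preceding two paragraphs, which has probability $1 - o(1)$, $M_1 \in I$ and at time $M_1$ the complex $Y(n,M_1)$ has neither isolated $(k-1)$-faces (by definition of $M_1$) nor non-singleton separating sets (by the first-moment step), so $Y(n,M_1)$ is hypergraph connected. This gives $M_2 \le M_1$, which combined with the trivial direction yields $M_1 = M_2$, as required. The principal technical obstacle is the first-moment step: one needs a sufficiently sharp lower bound on $e(A)$ uniformly across the full range $|A| \in [k+1, \binom{n}{k}/2]$ and a careful summation against $\binom{\binom{n}{k}}{s}$, strong enough to absorb the additional union bound over $M \in I$. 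The computation parallels the Meshulam--Wallach estimates of \cite{mw} for minimal cocycle supports, but should be significantly cleaner here because hypergraph connectivity is strictly weaker than cohomological connectivity: one only has to exclude separating sets rather than all minimal cocycle supports.
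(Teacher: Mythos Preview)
Your strategy is plausible but diverges substantially from the paper's, and the step you flag as the ``principal technical obstacle'' is precisely the step the paper avoids.

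The paper does \emph{not} attempt a uniform first-moment bound on non-singleton separating sets across the window. Instead it (i) embeds $\{Y(n,M)\}$ into a continuous-time process $\{Y_t(n)\}$ via i.i.d.\ exponential clocks, so that $Y_t(n)\sim Y(n,p(t))$; (ii) invokes the structural result already established in the proof of Theorem~\ref{thm:conn,k} (by induction on $k$ through links, not by isoperimetry) at a \emph{single} time $t_1$ corresponding to $p_1=(k\log n-\log\log n)/n$, obtaining that w.h.p.\ $Y_{t_1}(n)$ is a giant component plus $X_n\sim(\log n)^{1/k!}$ isolated $(k-1)$-faces; and (iii) exploits the process dynamics: once the structure is ``giant plus isolated'', it can only be violated if some newly born $k$-simplex in $[t_1,t_2]$ contains \emph{two} of the currently isolated $(k-1)$-faces. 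The probability of that is at most $X_n^2(t_2-t_1)=o(1)$. Hence the isolated faces are absorbed by the giant one at a time, and connectivity occurs exactly when the last one disappears.

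Your approach trades this dynamical monotonicity argument for a direct union bound over all separating sets $A$ with $k+1\le|A|\le\binom{n}{k}/2$, together with a Kruskal--Katona-type lower bound on the crossing count $e(A)$. Two remarks. First, the union bound over $M\in I$ is unnecessary: if $A$ is separating at time $M'$ it is separating at every $M\le M'$, so it suffices to check the bottom of the window. Second, and more seriously, you have not supplied the isoperimetric estimate, and getting it uniformly over the full range of $|A|$ (in particular for $|A|$ of order $\binom{n}{k}$) is exactly the kind of calculation the paper sidesteps via the link induction. Your approach may well go through, but as written it leaves open the one genuinely nontrivial computation, whereas the paper's route reduces everything to the already-proved static picture plus a one-line bound on $X_n^2(t_2-t_1)$.
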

\bi
 
Furthermore we sharpen the Linial-Meshulam and Meshulam-Wallach theorems.
\begin{theorem}\label{thm:homconn,2}  If
$$
p=\frac{k\log n +c}{n},
$$
where $c\in \Bbb R$ is constant, then $\beta^{k-1} := \dim(H^{k-1}(Y, \Z/2))$ is asymptotically Poisson, with
mean $e^{-c}/k!$. In particular, $H^{k-1}(Y, \Z/2)$ vanishes with limiting probability
$\exp (-e^{-c} / k!)$. 
\end{theorem}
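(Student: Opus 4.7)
The plan is to couple $\beta^{k-1}$ to the number $X$ of isolated $(k-1)$-faces of $Y(n,p)$: first show $X\xrightarrow{d}\textup{Poi}(e^{-c}/k!)$, and then show $\prob[\beta^{k-1}=X]\to 1$. The theorem follows immediately, since if $\beta^{k-1}=X$ w.h.p.\ and $X$ converges to Poisson, so does $\beta^{k-1}$.

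The Poisson convergence of $X$ is by the method of factorial moments, as in the proof of Theorem \ref{thm:conn,k}. For $r$ disjoint $(k-1)$-faces $\alpha_1,\dots,\alpha_r$ (which dominate the count), the $k$-faces covering the various $\alpha_i$ are pairwise distinct (since $|\alpha_i\cup\alpha_j|=2k>k+1$ for $k\ge 2$), so the $r$ isolation events are independent, each of probability $(1-p)^{n-k}$. Combining $\binom{n}{k}^{(r)}\sim n^{rk}/(k!)^r$ with $(1-p)^{r(n-k)}\sim n^{-rk}e^{-rc}$ yields $\ex[X^{(r)}]\to(e^{-c}/k!)^r$, whence $X\xrightarrow{d}\textup{Poi}(e^{-c}/k!)$.

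For the lower bound $\beta^{k-1}\ge X$ w.h.p., note that the characteristic function $e_\alpha\in C^{k-1}$ of an isolated $\alpha\in I$ is a cocycle, since $(d_{k-1}e_\alpha)(\sigma)=[\alpha\subset\sigma]=0$ for every $\sigma\in F_k(Y)$. Any nonzero $\Z/2$-combination $\sum_{\alpha\in S}e_\alpha$ with $S\subseteq I$ has weight $\le |S|\le X$. On the other hand, every nonzero coboundary $d_{k-2}g$ of the complete $(k-1)$-skeleton has weight at least $n-k+1$, attained by $g=e_\beta$ for a single $(k-2)$-face $\beta$, whose coboundary is supported on the $n-k+1$ $(k-1)$-faces $\beta\cup\{v\}$. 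Since $X=O_p(1)$ while $n-k+1\to\infty$, w.h.p.\ no such combination lies in $B^{k-1}$, and the $X$ classes $\{[e_\alpha]\}_{\alpha\in I}$ are $\Z/2$-independent in $H^{k-1}$.

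The matching upper bound $\beta^{k-1}\le X$ is the main content, via a refined Meshulam--Wallach argument. For each class $[f]\in H^{k-1}$ pick a minimum-weight representative $\tilde f$ with support $A=\textup{supp}(\tilde f)$. Minimality forces the \emph{half-link condition} $|A\cap L(\beta)|\le(n-k+1)/2$ for every $\beta\in F_{k-2}$, where $L(\beta)\subset F_{k-1}$ denotes the $(k-1)$-faces containing $\beta$; otherwise flipping the value of $g$ at $\beta$ would strictly reduce the weight of $\tilde f+d_{k-2}g$. The cocycle condition then requires the deterministic set
\[
B(A)=\{\sigma\in F_k:|\{\tau\in A:\tau\subset\sigma\}|\text{ is odd}\}
\]
to be disjoint from $F_k(Y)$, an event of probability $(1-p)^{|B(A)|}$. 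A Meshulam--Wallach-type lower bound $|B(A)|\gtrsim|A|\cdot n$ for half-link sets with $|A|\ge 2$, combined with a careful combinatorial enumeration of such $A$, shows $\sum_{|A|\ge 2}(1-p)^{|B(A)|}=o(1)$ at $p=n^{-1}(k\log n+c)$, while the weight-$1$ case forces $\tilde f=e_\alpha$ with $\alpha\in I$, contributing exactly the $X$ classes from the lower bound. The main obstacle here is precisely this sharpening: the Meshulam--Wallach sum was calibrated only to vanish when $\omega\to\infty$, whereas in the critical window the weight-$1$ contribution is $\Theta(1)$ rather than $o(1)$, so one must excise it cleanly and tighten the bound on weight-$\ge 2$ configurations; the delicate cases are near-extremal $A$ concentrated in the link of a single vertex or $(k-2)$-face, where $|B(A)|$ nearly saturates its lower bound and requires a dedicated structural analysis.
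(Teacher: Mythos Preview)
Your strategy---identify $\beta^{k-1}$ with the isolated-face count $X$ and establish the Poisson limit for $X$ by factorial moments---matches the paper's, and your lower bound $\beta^{k-1}\ge X$ is fine.

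The upper-bound argument, however, has a genuine gap. The claim $\sum_{|A|\ge 2}(1-p)^{|B(A)|}=o(1)$ over half-link sets is \emph{false}: pairs $A=\{\alpha,\alpha'\}$ of disjoint $(k-1)$-faces are trivially half-link, have $|B(A)|=2(n-k)$, and already contribute $\sim\tfrac{1}{2}(e^{-c}/k!)^{2}=\Theta(1)$ to the sum. This is unavoidable, because on the event $\{X\ge 2\}$ the class $[e_\alpha]+[e_{\alpha'}]$ is nonzero and, by your own independence argument, has no weight-$1$ representative; its minimum-weight representative is $e_\alpha+e_{\alpha'}$ itself. So ``each class has a weight-$1$ minimal representative'' is simply not true when $X\ge 2$, and per-class minimization cannot close the argument. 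The paper restricts the sum to supports with a \emph{single non-trivial component} (its property (2)); with that restriction the weight-$\ge 2$ sum is indeed $o(1)$, but then you must change the minimization. Set $W=B^{k-1}+\mathrm{span}\{e_\alpha:\alpha\in I\}$; if $Z^{k-1}\ne W$, a minimum-weight $f\in Z^{k-1}\setminus W$ has the half-link property (subtracting a coboundary stays outside $W$), has connected support (a disconnected cocycle splits as $f_1+f_2$ with one summand outside the subspace $W$ and of strictly smaller weight), and has weight $\ge 2$ (weight $1$ forces $f=e_\alpha\in W$). The restricted $o(1)$ bound then gives $Z^{k-1}=W$ w.h.p., hence $\beta^{k-1}=X$. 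The paper itself glosses over this last deduction, but since its sum is already restricted to connected supports, the argument goes through once this coset minimization is inserted.
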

\si
Since $e^{-e^{-c}/k!}\to 0$ if $c\to-\infty$, and $e^{-e^{-c}/k!}\to 1$ if $c\to\infty$, Theorem 
\ref{thm:homconn,2} implies Theorem \ref{thm:LM}.
We should also note that Theorem \ref{thm:homconn,2} in combination with Theorem \ref{thm:conn}, imply Theorem \ref{thm:Step} as a direct byproduct.\\
 
For $k=2$ we prove a cohomological extension of Theorem \ref{BT}.
\begin{theorem}\label{thm:homBT,2} For almost all realizations of the $2$-dimensional process 
$\{Y(n,M)\}$, the $2$-face (triangle), that eliminates the chronologically last isolated $1$-face (edge), also makes $H^1(Y, \Z/2) = 0$.
\end{theorem}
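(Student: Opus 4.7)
The plan is to mimic the Bollob\'as--Thomasson strategy of Theorem \ref{BT} at the level of cohomology. Let $M_1$ denote the step of the process $\{Y(n,M)\}$ at which the last isolated edge gets covered by a triangle, and $M_2$ the first step at which $H^1(Y(n,M),\Z/2)=0$. For each isolated edge $\alpha$ its characteristic function $f_\alpha\in C^1$ is a cocycle and, provided the non-isolated edges span a connected subgraph of $K_n$ (which holds w.h.p.\ in our regime), is not a coboundary; more generally the $f_\alpha$ over all isolated edges $\alpha$ are linearly independent in $H^1$. Hence $\beta_M:=\beta^1(Y(n,M))\ge I_M:=|\{\text{isolated edges of }Y(n,M)\}|$ throughout the process, and so $M_1\le M_2$ deterministically. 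The task is to prove the reverse inequality w.h.p.

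Choose a slowly growing sequence $c=c(n)\to\infty$, say $c(n)=\log\log n$, and set $p_\pm=(2\log n\pm c(n))/n$ and $M_\pm=\lfloor\binom{n}{3}p_\pm\rfloor$. By Theorem \ref{thm:conn,k} (in its process reformulation via standard transfer between $Y(n,p)$ and $Y(n,M)$), $I_{M_-}>0$ and $I_{M_+}=0$ w.h.p., so $M_1\in(M_-,M_+]$ w.h.p. Setting $D_M:=\beta_M-I_M\ge 0$, it suffices to prove that w.h.p.\ $D_M\equiv 0$ on $[M_-,M_+]$; this would force $\beta_{M_1}=I_{M_1}=0$, and hence $M_2\le M_1$.

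Call a triangle insertion in the process a \emph{bad} step if the inserted triangle $\sigma$ covers two or three previously isolated edges. I plan to show two facts, which together imply $D_M\equiv 0$ on the window. First, the initial condition $D_{M_-}=0$ w.h.p. This should follow from a quantitative form of the Linial--Meshulam argument that drives Theorem \ref{thm:homconn,2}: the first-moment bounds on the expected number of minimal non-coboundary cocycles of support size $>1$ are $o(1)$ not only at $p=(2\log n+c)/n$ for constant $c$, but uniformly for $p$ in our window so long as $c(n)=o(\log n)$. Second, the equality $D_M=0$ propagates forward in the absence of bad steps. Indeed, when a triangle $\sigma$ is added, $\beta$ drops by $1$ precisely when the row of the incidence matrix $I_1$ indexed by $\sigma$ is not in the span of the previously added rows, which is automatic whenever $\sigma$ covers a currently isolated edge $\alpha$ (no earlier row has a $1$ in column $\alpha$). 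Thus if $\sigma$ covers exactly one isolated edge, then $\beta$ and $I$ both drop by $1$, preserving $D$. If $\sigma$ covers no isolated edge, then $I$ is unchanged and every surviving $f_\alpha$ remains a cocycle, forcing $\beta_{M+1}\ge I_{M+1}=I_M=\beta_M\ge\beta_{M+1}$, so $\beta$ is unchanged as well.

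It remains to bound the expected number of bad steps in $(M_-,M_+]$. Realize $\{Y(n,M)\}$ through i.i.d.\ uniform arrival times $U_\tau\in[0,1]$ on triangles. For a triangle $\sigma$ and a pair $\{e_i,e_j\}$ of its edges, there are $2(n-3)$ other triangles containing $e_i$ or $e_j$; the event that $\sigma$ is added during $(p_-,p_+]$ and is the earliest-arriving triangle among $\{\sigma\}\cup\{\text{competitors for }e_i\text{ or }e_j\}$ has probability
\[
\int_{p_-}^{p_+}(1-t)^{2(n-3)}\,dt\ \le\ (p_+-p_-)(1-p_-)^{2(n-3)}\ =\ O\!\left(\frac{c(n)\,e^{2c(n)}}{n^5}\right).
\]
Summing over $\binom{n}{3}$ triangles and $3$ pairs of edges per triangle, the expected number of bad steps is $O(c(n)\,e^{2c(n)}/n^2)$, which is $o(1)$ for $c(n)=\log\log n$. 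Hence w.h.p.\ no bad step occurs in $(M_-,M_+]$, and combined with $D_{M_-}=0$ w.h.p.\ we conclude $D_M\equiv 0$ on the window and $M_1=M_2$ w.h.p. The hardest step will be the initial condition $D_{M_-}=0$: it requires revisiting the Linial--Meshulam first-moment estimates and verifying that they remain $o(1)$ at $p=(2\log n-c(n))/n$ with $c(n)\to\infty$ slowly.
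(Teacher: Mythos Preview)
Your approach is correct and structurally different from the paper's. Both proofs work inside the window determined by $p_\pm=(2\log n\pm\log\log n)/n$, but the architectures diverge. The paper argues directly at the random hitting time $\tau$: it writes a union bound over all candidate cochains $f$ with support size at least $2$, over all possible values $(u,v)$ of the last isolated edge, and integrates over $t\in[t_1,t_2]$; this forces a re-run of the estimates of Lemma~\ref{p=(2logn+xn)/n} with $X(G(f))$ replaced by the slightly smaller count $X_{(u,v)}(G(f))$ of odd triangles avoiding $(u,v)$, and the paper checks in each regime that the loss of at most $n-2$ triangles is harmless. Your route is more modular: you invoke Lemma~\ref{p=(2logn+xn)/n} once, unmodified, to get the initial condition $\beta_{M_-}=I_{M_-}$ at the bottom of the window, and then propagate equality forward via the ``no bad step'' estimate, which is essentially the same calculation already used in the proof of Theorem~\ref{BT,k}. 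This avoids redoing the cocycle counting with the $(u,v)$-correction, at the price of the easy rank-increment observation that adding a triangle through a currently isolated edge always drops $\dim Z^1$ by one.

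Two minor remarks. First, your justification for $\beta_M\ge I_M$ is slightly off: the correct reason the classes $[f_\alpha]$ are independent in $H^1$ is that every nonzero coboundary is the indicator of a cut of $K_n$ and hence has support at least $n-1$, while any $\Z/2$-combination of the $f_\alpha$ has support at most $I_M\le I_{M_-}\sim(\log n)/2$. Second, the step you flag as hardest, the initial condition at $p_-$, is exactly Lemma~\ref{p=(2logn+xn)/n}\,(1), which is already stated and proved for all $|x_n|=o(\log n)$ with the bound $O(n^{-1}e^{2|x_n|})=O(n^{-1}\log^2 n)=o(1)$; no further work is needed there.
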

Thus,  while the three random moments,
\begin{align*}
M_1=:&\,\min\{m: Y(n,m)\text{ has no isolated edge}\},\\
M_2=:&\,\min\{m: Y(n,m)\text{ is hypergraph connected}\},\\
M_3=:&\,\min\{m: H^1(Y(n,m))\text{ vanishes}\},
\end{align*}
(always obeying $M_1\leq M_2\leq M_3$), may generally be distinct, 
the event $\{M_1=M_2=M_3\}$ has probability approaching $1$ as $n\to\infty$.\\

A key part of our proof is based on counting non-trivial cocycles by the degree sequences of their 
supports, an approach considerably simpler than deep counting arguments in \cite{lm},
\cite{mw}.  We conjecture that the extension of Theorem \ref{thm:homBT,2}
holds for all $k\ge 2$, and for cohomology with coefficients in any finite abelian group.
Since the proofs of Theorems \ref{thm:conn,k} and \ref{BT,k} are relatively simple, we wonder whether such an extension could be proved by using links and induction on $k$. It may well
be possible also to get it done by a proper modification of the method  in \cite{mw}, but
we haven't explored this route.

\section{Proofs of Theorem \ref{thm:conn,k} and Theorem \ref{BT,k}.} 

{\bf (1)\/} Let $A_n$ be the event that all non-isolated $(k-1)$-faces of $Y \sim Y_k(n,p)$  belong to the same component, or equivalently that every two non-isolated
$(k-1)$-faces are joined by a path in $Y$, in the sense of ``hypergraph connected'' described above. 

Given a vertex $v\in [n]$, define the vertex link
$$
\link_Y(v)=\{\sigma-v : \sigma\in Y(n,p),\,v\in \sigma]\}.
$$
So each $\link_Y(v)$ is a $(k-1)$-dimensional complex, distributed as $Y_{k-1}(n-1,p)$.
Of course, $\link_Y(v)$, $v\in [n]$, are interdependent.  Let $ A_n(v)$ be the
the event that every two non-isolated $(k-2)$-faces of $\link_Y(v)$ are joined
by a path in $\link_Y(v)$. Let  $B_n$ be the event that for every $\alpha,\,\alpha^\prime\in 
F_{k-1}(Y(n;p))$,
with $\alpha\cap\alpha^\prime =\emptyset$, there exist $v\in \alpha$ and $v^\prime\in \alpha^\prime$ such that
$(\alpha^\prime\setminus\{v^\prime\})\cup\{v\}$ is a non-isolated $(k-1)$-face of $Y(n,p)$. Then
\begin{equation}\label{Ansupset}
A_n\supseteq \left(\bigcap_{v\in [n]} A_n(v)\right)\bigcap \, B_n.
\end{equation}
Indeed, suppose that the RHS event in \eqref{Ansupset} holds. Let $\alpha,\alpha^\prime\in F_{k-1}(Y(n,p))$ be non-isolated. If there is
$v\in \alpha\cap \alpha^\prime$ then $\alpha\setminus \{v\}$ and $\alpha^\prime\setminus
\{v\}$ are non-isolated $(k-2)$-faces in $\link_Y(v)$, whence they are joined by a path
in $\link_Y(v)$. By the definition of $\link_Y(v)$, augmenting the edges of this path with
$v$, we get a path joining $\alpha$ and $\alpha^\prime$ in $Y(n,p)$. Suppose that 
$\alpha\cap\alpha^\prime=\emptyset$. Then, by the definition of $B_n$,
there exist $v\in \alpha$ and $v^\prime\in \alpha^\prime$ such that
$\alpha^{\prime\prime}:=(\alpha^\prime\setminus\{v^\prime\})\cup\{v\}$ is non-isolated. 
By the first part, $\alpha$ and $\alpha^{\prime\prime}$ are joined by a path in $Y(n,p)$,
and likewise so are $\alpha^{\prime\prime}$ and $\alpha^\prime$.
\si

Let $g_k(n;p)=\pr(A_n^c)$, and $h_k(n;p)=\pr(B_n^c)$. 
Then \eqref{Ansupset} implies a recurrence inequality
\begin{equation}\label{ineqforg}
g_t(n;p)\le ng_{t-1}(n-1;p) +h_t(n;p),\quad t\ge 2.
\end{equation}
Let us  bound $h_t(n;p)$. We observe that the number of of ordered pairs of disjoint
$\alpha,\,\alpha^\prime\in F_{t-1}(Y(n,p)$  is less than $\binom{n}{t}^2$, and the number of pairs $(v,v^\prime)$, 
$v\in \alpha$, $v^\prime\in \alpha$, is $t^2$. The probability that for every such pair $(v,v^\prime)$
there does not exist $u\in [n] \setminus (\alpha\cup \alpha^\prime)$ such 
$(\alpha^\prime\setminus \{v^\prime\})\cup\{v\}\cup\{u\}$ is in $F_t(Y(n,p))$
is $q^{(n-2t)t^2}$.
Therefore, for $t\le k$,
$$
h_t(n;p)\le \binom{n}{t}^2q^{(n-2t)t^2}\le a n^{2t}q^{nt^2},\quad a:=\frac{e^{2k^2}}{k!}.
$$
So \eqref{ineqforg} simplifies to
\begin{equation}\label{simplineqforg}
g_t(n;p)\le ng_{t-1}(n-1;p)+ an^{2t}q^{nt^2}.
\end{equation}
Since $q^n=O(n^{-k})$, an easy induction shows that 
\begin{equation}\label{ind}
g_t(n;p)\le n^{t-1}g_1(n-t+1;p)+2an^{t+2}q^{4n},\quad 2\le t\le k.
\end{equation}
Consider \eqref{ind} for $t=k$. If 
\begin{equation}\label{|c(n)|small}
p=\frac{k\log n +c(n)}{n},\quad |c(n)|=o(\log n),
\end{equation}
then
$$
n^{k+2}q^{4k}=O(n^{k+2}n^{-4k}e^{4|c(n)|})=O(n^{-2k+2}e^{4|c(n)|})\to 0,
$$
as $k\ge 2$. Furthermore, $g_1(n-k+1;p)$ is the probability that $G(n-k+1;p)$ has a
component of size from $2$ to $(n-k+1)/2$, which---for $p$ in question---is bounded by
twice the expected number of components of size $2$. And this expected value is of order
$$
n^2p(1-p)^{2n}\le n^2pe^{-2np}=O\left(\frac{e^{2|c(n)|}\log n}{n^{2k-1}}\right).
$$
So the first term in the RHS of \eqref{ind} is of order $O(n^{-k}e^{2|c(n)|}\log n)$. In summary,
$$
\Bbb P(A_n^c)=O(n^{-k}e^{2|c(n)|}\log n).
$$
Thus, under condition \eqref{|c(n)|small},
\begin{equation}\label{PAn>}
\Bbb P(A_n)=1-O(n^{-k}e^{2|c(n)|}\log n)\to 1.
\end{equation}
\si

It remains to show that, for $p=(k\log n+c)/n$, $X_n$ the total number of isolated $(k-1)$-faces of
$Y(n,p)$ is asymptotically Poisson, with mean $e^{-c}/k!$. This is done by
a standard argument based on factorial moments. 
So $Y(n,p)$ is connected
with the limiting probability $e^{-e^{-c}/k!}$. The proof of Theorem \ref{thm:Step} is
complete. \qed
\si
{\bf Note.\/} If $c(n)=-\log \log n$, say, then the second order moment method shows that  
\begin{equation}\label{Xnlarge}
\frac{X_n}{(\log n)^{1/k!}}\to 1,\quad\text{in probability}.
\end{equation}
\bi

{\bf (2)\/} Let us prove Theorem \ref{BT,k}. We embed the random complex process
$\{Y(n,M)\}$ into a continuous-time Markov process $\{Y_t(n)\}$, $t\ge 0$. To do so, 
we introduce  i.i.d. random variables $T_{\sigma}$, $\sigma\in F_k$, with $\Bbb P(T_{\sigma}\le t)=1-e^{-t}$. $T_{\sigma}$ can be interpreted as a waiting time till  ``birth'' of the $k$-face $\sigma$. We define 
$$
Y_t(n)=\{\sigma\in F_k: T_{\sigma}\le t\}.
$$
So $Y_t(n)$ is a complex whose $k$-faces have been born up to time $t$; $Y_0(n)$ is the
complete $(k-1)$-dimensional complex, and $Y_{\infty}(n)$ is the complete $k$-dimensional complex. Clearly, 
$Y_t(n)$ is a Bernoulli complex $Y(n,p)$ with $p=p(t)$. Also, $\{Y_t(n)\}_{t\ge 0}$
is a Markov process, thanks to memoryless property of the exponential distribution. Introduce a sequence $\{t(M)\}$ of stopping times such
$$
t(M)=\min\{t\ge 0 : |\{\sigma:T_{\sigma}\le t\}|=M\};
$$
in words, $t(M)$ is the first time $t$ the number of $k$-faces reaches $M$. Then (1) each 
$Y_{t(M)}(n)$ is distributed uniformly on the set of all complexes with $M$ $k$-faces, 
and (2) conditioned on $Y_{t(M-1)}(n)$, the location of $M$-th $k$-face in $Y_{t(M)}(n)$ is
distributed uniformly on the set of all $\binom{n}{k+1}-(M-1)$ available locations. Thus
$\{Y_{t(M)}(n)\}$ is distributed as $\{Y(n,M)\}$. \si

Introduce 
$$
p_1=\frac{k\log n -\log\log n}{n},\quad p_2=\frac{k\log n +\log\log n}{n},
$$
and $t_i$ defined by $p_i=1-e^{-t_i}$. Since $Y_{t}(n)$ is distributed as $Y(n,p(t))$, it follows
from \eqref{Xnlarge} that w.h.p.\ $Y_{t_1}(n)$ consists of $X_n\sim(\log n)^{1/k!}$ isolated
$(k-1)$-faces and a single component on the remaining $\left[\binom{n}{k}-X_n\right]$ $(k-1)$-faces.  As for $Y_{t_2}(n)$, w.h.p.\ it consists of a single component.  Let 
$\tau$ be the first time $t$ when the number of isolated $(k-1)$-faces drops down by two
or more. Then
\begin{align*}
\Bbb P(\tau\le t_2\mid Y_{t_1}(n))\le&\, X_n^2\int_{t_1}^{t_2}e^{-t}\,dt\\
\le&\,X_n^2(t_2-t_1)=O\bigl(n^{-1}(\log n)^{2/k!}\log\log n\bigr)\to 0.
\end{align*}
Here $X_n^2$ is a crude upper bound for the number of pairs of 
$(k-1)$-faces, isolated at time $t_1$, that happen to be the faces of the same $k$-simplex. And $e^{-t}$ is the 
probability density of the birth time for such a $k$-simplex. 

So, w.h.p.\ throughout $[t_1,t_2]$ the complex $Y_t(n)$ continues to be a giant component
plus a set of isolated $(k-1)$-faces, gradually swallowed, one such face at a time, by the
current giant component. Thus, w.h.p.\ $Y_t(n)$ becomes connected when the
last isolated $(k-1)$-face gets joined by a newly born $k$-simplex to the current giant
component.  Consequently, the same property holds for
the subprocess $\{Y_{t(M)}(n)\}\overset{\Cal D}\equiv \{Y(n,M)\}$.\qed

\section{Proof of Theorem \ref{thm:homconn,2} for $k=2$} 

The reason we present an argument  for $k=2$ separately is that our proof of Theorem \ref{thm:homBT,2}  is essentially  this argument's  follow-up.  

Thus we consider $Y(n,p):=Y_2(n,p)$, the Bernoulli $2$-dimensional complex with the complete 
$1$-dimensional skeleton. Our main task is to bound the expected number 
of non-trivial cocycles for $p$ close to $(2\log n)/n$.  
\si

A $1$-cocycle $f$ induces a graph $G=G(f)$ on the vertex set $[n]$ with the edge set $E(G)=
\{\bold u\in F_{1}\,:\,f(\bold u)=1\}$, i.e.\  the support of $f$.  Let
$\bold d =\bold d_G=\{d_G(\bold v)\}=\{d(\bold v)\}$ be the degree sequence of $G=G(f)$.
A key idea of \cite{lm}, \cite{mw} was to focus on non-trivial cocycles $f$
with the smallest $|E(G(f))|$. These extremal cocycles have three crucial properties. 
\si

First of all, 
it turned out that, for every such cocycle $f$, 
\begin{equation}\label{maxdv<}
\max_{\bold v\in [n]} d\bigl(\bold v(G(f))\bigr)\le D:=\left\lfloor \frac{n-1}{2}\right\rfloor,
\end{equation}
a crucial improvement of the trivial bound $n-1$.  (For $k\ge 2$, the bound is $\lfloor (n-k+1)/2\rfloor$.)
\si

Second, the graph $G(f)$ has a single non-trivial component.
\si

To formulate the third, rather subtle, property, introduce $X(f)$, the number of such triangles that contain an odd number,
$1$ or $3$, edges from $E(G(f))$. Then
\begin{equation}\label{X(f)>nm/3}
 X(f)\ge \frac{n |E(G(f))|}{3}.
 \end{equation}
(For the $k$-dimensional complex, the lower bound is $n|E(f)|/(k+1)$, \cite{mw}.)

Why does $X(f)$ matter so much? Because the probability $P_n$ that $Y(n,p)$ has a non-trivial
$1$-cocycle is {\it at most\/}  the expected number of $1$-{\it cochains\/} $f$,
having those three properties, such that $Y(n,p)$ does not contain any one of $X(f)$ triangles.
The probability of this event is
$$
(1-p)^{X(G(f))}\le e^{-pX(G(f))}.
$$
Therefore
\begin{equation}\label{Pn<s(m)}
P_n\le\,\sum_{m\ge 1}s(m),\quad s(m):=\sum_{G: e(G)=m}e^{-pX(G)};
\end{equation}
here $X(G)$ is the total number of triangles that contain an odd number of edges of $G$,
and the sum is over all graphs $G$ with $e(G)=m$ edges, of maxdegree $\le\lfloor (n-1)/2\rfloor$,
with a single non-trivial component, and $X(G)\ge ne(G)/3$.
\begin{Lemma}\label{p=(2logn+xn)/n} Let
\begin{equation}\label{p>}
p=\frac{2\log n +x_n}{n},\quad |x_n|=o(\log n).
\end{equation}
Then (1)
\begin{equation}\label{EG>1}
\sum_{G\,:\, e(G)>1}\exp\bigl[-pX(G)\bigr]\le_b n^{-1}e^{2|x_n|}\to 0,
\end{equation}
and (2)
\begin{equation}\label{eG=1}
\sum_{G\,:\, e(G)=1}\exp\bigl[-pX(G)\bigr]\sim \frac{e^{-x_n}}{2}.
\end{equation}
\end{Lemma}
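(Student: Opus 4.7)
The plan is to handle (2) directly and (1) via a structural identity for $X(G)$ organized by the degree sequence of the support $G$. For (2), when $e(G)=1$ the support is a single edge $\{u,v\}$, and the only triangles containing an odd number of $G$-edges are the $n-2$ triangles $\{u,v,w\}$, giving $X(G)=n-2$; hence $\sum_{e(G)=1}e^{-pX(G)}=\binom{n}{2}e^{-p(n-2)}$, and with $p(n-2)=2\log n+x_n-o(1)$ this is $\sim\tfrac{1}{2}e^{-x_n}$.

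For (1), the core is the degree-sequence bound
\[
X(G)\;\ge\;m(n-2)-2\sum_v\binom{d_G(v)}{2}.\qquad(\ast)
\]
To derive $(\ast)$, let $T^{(j)}(G)$ count triangles of the complete $1$-skeleton with exactly $j$ edges in $G$; incidence counting of pairs (edge of $G$, containing triangle) gives $T^{(1)}+2T^{(2)}+3T^{(3)}=m(n-2)$, and together with $X(G)=T^{(1)}+T^{(3)}$ this yields $X(G)=m(n-2)-2(T^{(2)}+T^{(3)})$. Each triangle in $T^{(2)}\cup T^{(3)}$ contains a path of length $2$ in $G$ rooted at the apex where its two $G$-edges meet, and distinct triangles produce distinct rooted paths, so $T^{(2)}+T^{(3)}\le P_2(G):=\sum_v\binom{d_G(v)}{2}$.

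With $(\ast)$ in hand I would organize $\sum_{e(G)\ge 2}e^{-pX(G)}$ by degree sequence $\mathbf d$, using a configuration-model bound $N(\mathbf d)\le(2m)!/(2^m m!\prod_v d_v!)$ on the number of graphs realizing $\mathbf d$. The dominant contribution is $m=2$ with $\mathbf d=(2,1,1,0,\ldots)$, i.e.\ paths of length $2$: there are $3\binom{n}{3}\sim n^3/2$ of these and $X(G)=2n-6$ exactly, so this term contributes $\sim\tfrac{1}{2}n^{-1}e^{-2x_n}$, already matching the desired bound. For $m\ge 3$ with the non-trivial component of size $v=o(n)$, the inequality $P_2(G)\le m(v-2)$ (which follows from $d_G(v)\le v-1$ and the identity $\binom{d}{2}\le d(v-2)/2$) inserted into $(\ast)$ yields $X(G)\ge mn(1-o(1))$; combined with the count $\binom{n}{v}\binom{\binom{v}{2}}{m}$ of candidate graphs, these produce contributions of order $n^{-(m-1)}e^{-m x_n}$, geometric in $m$ and thus dominated by $s(2)$.

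The principal obstacle is the large-$m$ regime in which the non-trivial component has size comparable to $n$: then $P_2(G)\le m(v-2)$ is far too loose, and the max-degree bound $d_v\le D=\lfloor(n-1)/2\rfloor$ inserted directly in $(\ast)$ only gives the useless $X(G)\ge m$. I plan to abandon $(\ast)$ here and instead use the Meshulam--Wallach inequality $X(G)\ge nm/3$ built into the extremal-cocycle definition, so that $e^{-pX(G)}\le n^{-2m/3}e^{-mx_n/3}$. Combined with a careful count of admissible graphs constrained by $\max d_v\le D$ (restricting the degree-sequence sum so it is not overwhelmed by the combinatorial growth of $\binom{\binom{n}{2}}{m}$), the resulting contribution decays with $m$ and is negligible. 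Putting the small-$m$, moderate-$m$, and large-$m$ regimes together yields $\sum_{e(G)\ge 2}e^{-pX(G)}\le_b n^{-1}e^{2|x_n|}$.
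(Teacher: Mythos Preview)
Your treatment of part (2) and of the small-component regime in part (1) is sound and parallels the paper. Your identity $(\ast)$ is equivalent to the paper's bound $X(G)\ge nm-\sum_u d_u^2$ (since $2\sum_v\binom{d_v}{2}=\sum_v d_v^2-2m$), and the paper too switches to the Meshulam--Wallach bound $X(G)\ge nm/3$ once $m$ is large enough, namely $m\ge m_n:=3n^{4/3}e^{|x_n|}$.

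The genuine gap is the intermediate regime $\nu\ge an$ (so $m\ge an-1$) with $m\le m_n$. Here your plan to ``abandon $(\ast)$ and use $X(G)\ge nm/3$'' cannot succeed: with $p=(2\log n+x_n)/n$ this only gives $e^{-pX(G)}\le n^{-2m/3}e^{-x_nm/3}$, while the number of admissible graphs with $m=\Theta(n)$ edges is of order $(en^2/2m)^m\approx n^{m(1+o(1))}$, and $n^{m/3}$ blows up. Your hope that the constraint $\max_v d_v\le D$ will cut down the graph count is misplaced: for $m=O(n^{4/3})$ the typical degree is $O(n^{1/3})\ll D$, so the degree cap is essentially vacuous as a counting restriction and cannot overcome the $n^{m/3}$ shortfall.

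What the paper does in this range is the opposite of abandoning $(\ast)$: it keeps $X(G)\ge nm-\sum_u d_u^2$ and combines the configuration bound with $e^{p\sum d_u^2}$ to reduce the problem to controlling $\sum_u\phi(d_u)$, where $\phi(d)=d\log(e/d)+pd^2$. The point is that $\phi$ is concave on $[0,1/(2p)]$ and convex on $[1/(2p),D]$; a convexity argument on the convex piece, using the endpoint $D=\lfloor(n-1)/2\rfloor$, yields $\sum_u\phi(d_u)=O\bigl(m(1+|x_n|)\bigr)$ uniformly over admissible degree sequences. This is where the maxdegree bound actually does its work---in the exponent, not in the count---and it is the missing idea in your plan. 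With that estimate one gets $\exp[-pnm+m\log m+\nu\log(m/\nu)+O(m(1+|x_n|))]\le e^{-\beta^* m\log n}$ for the intermediate range, which is what closes the argument.
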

\si
{\bf Proof of Lemma \ref{p=(2logn+xn)/n}.\/} The cases $e(G)=O(n)$ and $e(G)>n^{1+\varepsilon}$
are relatively simple, and it is the intermediate values of $e(G)$ where our argument truly
differs from those in \cite{lm}, \cite{mw}. To be sure, our treatment of $e(G)=O(n)$ is different enough 
to cover $p=(2 \log n +x_n)$, $|x_n|=o(\log n)$, compared with $x_n\to\infty$ in \cite{lm}-\cite{mw}.\\

Let $\nu=\nu(G)$ denote the number of vertices, and $m=e(G)$ the number of edges 
in a non-trivial component $C=C(G)$ of a generic graph $G$ in question. Then $m\ge\nu-1$.
\si

{\bf (1)\/} Let $\nu\le an$, where $a\in (0,1/4)$. Obviously
\begin{equation}\label{triv}
X(G)\ge (n-\nu)m.
\end{equation}
The total number of graphs $G$ on $[n]$ with $m$ edges and $|V(C(G))|=\nu$ is at most
$$
\binom{n}{\nu}\binom{\binom{\nu}{2}}{m}\le \binom{n}{\nu}\left(\frac{e\binom{\nu}{2}}{m}\right)^{m}.
$$
So
$$
\sum_{G:\, |V(C)|=\nu,\,|E(C)|=m}\exp\bigl[-pX(G)\bigr]\le\, S(\nu,m),
$$
where
$$
S(\nu,m):=\binom{n}{\nu} \left(\frac{e\binom{\nu}{2}}{m}\right)^{m}\exp\bigl[-p(n-\nu)m\bigr].
$$
Now, using $m+1\ge\nu$,
\begin{align*}
\frac{S(\nu,m+1)}{S(\nu,m)}\le&\, \frac{\nu^2}{m+1}\exp\bigl[-p(n-\nu)\bigr]\\
\le&\, \nu\exp\bigl[-p(n-\nu)\bigr]\le \frac{e^{|x_n|}}{n^{1-2a}}\to 0,
\end{align*}
as $a<1/2$. Hence, for $\nu\le an$, 
\begin{align*}
\sum_{m\ge\nu-1}S(\nu,m)\le& 2S(\nu,\nu-1)\le_b S(\nu);\\
S(\nu):=&\,(4n)^{\nu}\exp\bigl[-p(n-\nu)(\nu-1)\bigr].
\end{align*}
Now, for $\nu\le an$,
$$
\frac{S(\nu+1)}{S(\nu)}=4n\exp\bigl[-p(n-2\nu)\bigr]\le 4\frac{e^{|x_n|}}{n^{1-4a}}\to 0,
$$
as $a<1/4$. Therefore
\begin{equation}\label{3lenulean}
\sum_{G\,:\,3\le |V(C)|\le an}\exp\bigl[-pX(G)\bigr]=O\bigl( n^{-1}e^{2|x_n|}\bigr).
\end{equation}
And, of course,
\begin{equation}\label{nu=2}
\sum_{G\,:\,|V(C)|=2,\,|E(C)|=1}\exp\bigl[-pX(G)\bigr]=\binom{n}{2}\exp\bigl[-p(n-2)\bigr]\sim \frac{1}{2}e^{-x_n}.
\end{equation}
\si

{\bf (2)\/} Let $m\ge m_n:=3n^{4/3}e^{|x_n|}$. Using $X(G)\ge nm/3$, we have
\begin{align*}
s(m)\le&\, \sum_{\nu}\binom{n}{\nu}\binom{\binom{\nu}{2}}
{m}\cdot\exp\bigl[-pnm/3\bigr] \\
\le&\, 2^n\left(\frac{2n^2}{m}\right)^{m}\cdot\exp\bigl[-pnm/3\bigr] =
2^n \left(\frac{2n^2}{m e^{pn/3}}\right)^{m}\\
\le& 2^n\left(\frac{2n^{4/3}e^{|x_n|}}{m}\right)^{m}.
\end{align*}
Consequently
\begin{equation}\label{mularge}
\sum_{m\ge  m_n}\!\!\!s(m)\le \exp\bigl(n\log 2-n^{4/3}\log 2\bigr)\to 0,
\end{equation}
superexponentially fast.
\si

{\bf (3)\/} It remains to consider $\nu\ge an$ and $m\le m_n$. It is crucial that 
$m/\nu^2\to 0$ in this range. By symmetry,
\begin{equation}\label{bysymm}
\sum_{G: e(G)=m,|V(C(G))|=\nu}\exp\bigl[-pX(G)\bigr]=\binom{n}{\nu}
\sum_{G:e(G)=m,V(C(G))=[\nu]}\exp\bigl[-pX(G)\bigr].
\end{equation}
We need to show that the RHS tends to $0$ as $n\to\infty$. 

Let 
$\bold d=(d_1,\dots,d_{\nu})$ be the generic vertex degrees of $C(G)$;
so
\begin{equation}\label{|d|=}
\|\bold d\|:=\sum_u d_u=2\mu,\quad 1\le d_u\le n/2,\,\,\,\forall\, u\in [\nu].
\end{equation}
Notice upfront that the total number of such graphs, connected or not,  is bounded above by
\begin{equation}\label{graphs<d}
(2\mu-1)!!\prod_u \frac{1}{d_u!},
\end{equation}
see Bender and Canfield \cite{bc}. 

Our next step, logically, is to find a lower bound for
$X(G)$ in terms of $\bold d$. To this end, we first write 
\begin{equation}\label{XG=}
X(G)=(n-\nu)\mu+ Y_1(C)+Y_3(C),
\end{equation}
where $Y_1(C)$ ($Y_3(C)$ resp.) is the total number of triples $(u,v,w)$ from the $\nu$ vertices such that 
$(u,v)$ is an edge, and  $(u,w)$ and $(v,w)$ are not edges ($(u,w)$, $(v,w)$ are edges, resp.).  For a given edge $(u,v)$ the number of
$w\neq u,v$ such that at least one of $(u,w)$,  $(v,w)$ is  an edge is
\begin{align*}
&(d_u-1)+ (d_v-1)\\
- &|\{w\neq u,v\,:\, (u,w),\,(v,w)\text{ edges, both}\}|\\
=&\,(\nu-2)-|\{w\neq u,v\,:\, (u,w),\,(v,w)\text{ not edges}\}|,
\end{align*}
whence
\begin{multline*}
 |\{w\neq u,v\,:\, (u,w),\,(v,w)\text{ edges, both}\}|=(d_u+d_v)-\nu\\
 +|\{w\neq u,v\,:\, (u,w),\,(v,w)\text{ not edges}\}|.
 \end{multline*}
So, summing over all edges $(u,v)$, and noticing that every triangle in $C$
will be counted thrice,
\begin{align*}
3Y_3(C)=&\, \sum_{(u,v)\text{ edge}}(d_u+d_v)-\nu m+Y_1(C)\\
=&\,\frac{1}{2}\sum_{\{u,v\}:\,(u,v)\text{ edge}}(d_u+d_v)-\nu m +Y_1(C)\\
=&\,\sum_u d_u\sum_{v\neq u}\bold 1_{\{(u,v)\text{ edge}\}}-\nu m+Y_1(C)\\
=&\,\sum_u d_u^2-\nu m+Y_1(C).
\end{align*}
Consequently
\begin{equation}\label{Y13>}
Y_1(C)\ge  \nu m -\sum_u d_u^2,\quad Y_3(C)\ge\frac{1}{3}\left(\sum_ud_u^2-\nu m
\right).
\end{equation}
The second inequality is known,  see Lov\'asz \cite{lo}, Solution of Exercise 10.33.
Since
$$
\sum_u d_u^2 \ge \nu\left(\frac{2m}{\nu}\right)^2=\frac{4m^2}{\nu},
$$
it follows from \eqref{Y13>} that
\begin{equation}\label{Turan}
Y_3(C)\ge \frac{1}{3}\left(\frac{4m^2}{\nu}-\nu m\right),
\end{equation}
a classic inequality, due to Tu\' ran, useful  when $m> \nu^2/4$. However, in our case $m=o(\nu^2)$, so we pin our hopes on the lower bound for $Y_1(C)$ in \eqref{Y13>}. It gives
\begin{equation}\label{exp<exp(sumd2)}
\exp[-pX(G)]\le \exp\left(-pn m+p\sum_u d_u^2\right).
\end{equation}
So \eqref{bysymm}, \eqref{graphs<d} and \eqref{exp<exp(sumd2)} yield
\begin{equation}\label{sumnumu,degrees}
\begin{aligned}
\sum_{G:e(G)=m, \, |V(C(G))|=\nu}&\exp\bigl[-pX(G)\bigr]\\
\le&\, e^{-pnm}\binom{n}{\nu}(2m-1)!!\sum_{\bold d\text{ meets }\eqref{|d|=}}
\prod_u\frac{\exp(p\,d_u^2)}{d_u!}.
 \end{aligned}
 \end{equation}
Using $d!\ge (d/e)^d$, 
$$
\prod_u\frac{\exp(p\,d_u^2)}{d_u!}\le e^{H(\bold d)}, 
$$
where
$$
H(\bold d)=\sum_u \phi(d_u),\quad \phi(d)=d\log\frac{e}{d}+pd^2.
$$

Let us show that $\sum_{ v}\phi(d_{ v})$ is  ``negligible'', uniformly for $\{d_v\}$ in
question.
To this end, notice that
$$
\phi^{(2)}(d)=-\frac{1}{d}+2p
$$
is negative (positive resp.) for $d< \bar d$ ($d >\bar d$ resp.), where
\begin{equation}\label{bard=}
\bar d=\frac{1}{2p}=\frac{n}{2(2\log n+x_n)}=\Theta\left(\frac{n}{\log n}\right).
\end{equation}
That is, $\phi(d)$ is strictly concave  if $d\ge \bar d$. Since $d\in [\bar d,n/2]$ is a convex
combination of $\bar d$ and $D$,
$$
d=\frac{D-d}{D-\bar d}\,\bar d+\frac{d-\bar d}{D-\bar d}\,D,
$$
we have then
$$
\phi(d)\le  \frac{D-d}{D-\bar d\,}\,\phi(\bar d\,)+\frac{d-\bar d}{D-\bar d}\,\phi(D).
$$
So, introducing
\begin{equation}\label{defnu1,mu1}
\nu_1=\nu_1(\bold d):=|\{ v\,:\,d_{v}\ge \bar d\,\}|,\qquad
\mu_1=\mu_1(\bold d)=\sum_{ v\,:\,d_{ v}\ge \bar d} d_{ v},
\end{equation}
we have
\begin{multline}\label{sumphid>bard}
\sum_{\bold v\,:\,d_{\bold v}\ge \bar d}\phi(d)\le\frac{\phi(\bar d\,) }{D-\bar d}\,(\nu_1D-\mu_1)
+\frac{\phi(D)}{D-\bar d\,}\,(\mu_1-\nu_1 \bar d\,)\\
=\,-\nu_1\,\frac{\phi(D)\bar d-\phi(\bar d\,)D}{D-\bar d}+\mu_1\,\frac{\phi(D)-\phi(\bar d\,)}
{D-\bar d}.
\end{multline}
Direct computation shows that
\begin{align*}
\alpha_n:=&\,\frac{\phi(D)\bar d-\phi(\bar d)D}{D-\bar d}=\frac{n}{4}\bigl[
1+O\bigl((\log\log n +|x_n|)/\log n\bigr)\bigl],\\
\beta^n:=&\,\frac{\phi(D)-\phi(\bar d\,)}
{D-\bar d}=\,O(1+|x_n|);
\end{align*}
in particular, $\alpha_n>0$, and crucially $\beta^n =o(\log n)$.
Consequently,  since $\mu_1\le 2m$, \eqref{sumphid>bard} yields
\begin{equation}\label{sumphid>bard,compact}
\sum_{\bold v\,:\,d_{\bold v}\ge \bar d}\phi(d_{\bold v})\le O\bigl(m(1+|x_n|)\bigr).
\end{equation}
Next, if $d_{v}\in [0,\bar d\,)$, then, by \eqref{bard=}
$$
\phi(d_{ v})\le 2d_v + p\bar d\, d_{ v}= \frac{5}{2}\, d_{v}.
$$
so
\begin{equation}\label{sumphid<bard}
\sum_{v\,:\,d_{v}\le \bar d\,}\phi(d_{\bold v})\le \frac{5}{2}\sum_{v\,:\,d_{v}\le \bar d\,}d_{v}\le 5m.
\end{equation}
Combining \eqref{sumphid>bard,compact} and \eqref{sumphid<bard} we obtain
\begin{equation}\label{totalsumphid<}
\sum_{v}\phi(d_{ v})\le O\bigl(m(1+|x_n|)\bigr).
\end{equation}
Now the number of summands in the sum on the RHS of \eqref{sumnumu,degrees} 
is, at most,  the total number of positive solutions of $\|\bold d\|=2m$, which is
\begin{equation}\label{poscomp}
\binom{2m-1}{\nu-1}\le_b\left(\frac{2em}{\nu}\right)^{\nu}=\exp\bigl[\nu\log(m/\nu)
+O(\nu)\bigr].
\end{equation}
Also
\begin{equation}\label{binomsmall}
\binom{n}{\nu}\le 2^n,\quad (2m-1)!!=O((2m/e)^{m}). 
\end{equation}
Combining \eqref{sumnumu,degrees}, \eqref{totalsumphid<}-\eqref{binomsmall},
we obtain 
\begin{multline}\label{sumnumuless}
\sum_{G:\, |V(C)|=\nu,\,|E(C)|=m}\exp\bigl[-pX(G)\bigr]\\
\le\, \exp\bigl[-pnm+m\log m +\nu\log(m/\nu) +O(m(1 +|x_n|))\bigr].
\end{multline}
Here, since $m\le m_n:=3 n^{4/3}e^{|x_n|}$, 
\begin{multline*}
-pnm+m\log m +\nu\log(m/\nu)=-m\left(pn-\log m-\frac{\nu}{m}\log\frac{m}{\nu}\right)\\
\le -m\left(2\log n +2|x_n| - \frac{4}{3}\log n -\log 3 +O(n^{-1/3}\log n)\right)\le -\beta m\log n,
\end{multline*}
for a $\beta\in (0,2/3)$ and all large enough $n$. So, for $\beta^*\in (0,\beta)$,
\begin{equation}\label{sumnumu<beta*} 
\sum_{G:\, |V(C)|=\nu,\,|E(C)|=m}\exp\bigl[-pX(G)\bigr]\le \exp\bigl[-\beta^*m\log n\bigr], 
\end{equation}
uniformly for $\nu\ge an$,  $\nu-1\le m\le m_n$, and $n$ large enough.
It follows from \eqref{sumnumu<beta*} that
\begin{equation}\label{modernu,mu}
\sum_{\nu\ge an\atop \nu-1\le m\le m_n}\sum_{G:\, |V(C)|=\nu,\,|E(C)|=m}\exp\bigl[-pX(G)\bigr]
\le_b \exp(-\gamma n\log n),
\end{equation}
for $\gamma \in (0, a\beta^*)$.
\si

Combining \eqref{3lenulean}, \eqref{nu=2}, \eqref{mularge} and \eqref{modernu,mu} 
we complete the proof of Lemma \ref{p=(2logn+xn)/n}.\qed
\bi

Thus, for $p=(2 \log n +x_n)/n$, and $|x_n|=o(\log n)$, whp there are no extremal non-trivial
cocycles with support size $2$ or more. Let ${\Cal C}_n$ be the total number of  cocycles  with support size $1$, i.e.\ isolated edges.  By part (2) of this Lemma, if $x_n=c$, then 
$$
\lim_{n\to\infty}\Bbb E[{\Cal C}_n]=\lambda:=\frac{e^{-c}}{2}.
$$
And, as we had mentioned in the proof of Theorem \ref{thm:conn,k},  it can be shown that, in general,
$$
\lim_{n\to\infty}\Bbb E\bigl[({\Cal C}_n)_t\bigr]=\lambda^t,\quad t\ge 1.
$$
So ${\Cal C}_n$ is in the limit Poisson($\lambda$), which completes the proof of Theorem
\ref{thm:homconn,2}.\qed

\section{Proof of Theorem \ref{thm:homBT,2} }

As in the proof of Theorem \ref{BT,k}, we embed the $2$-dimensional process
$\{Y(n,M)\}$ into the continuous-time Markov process $\{Y_t(n)\}$. We introduce
$$
p_1=\frac{2\log n-\log\log n}{n},\quad p_2=\frac{2\log n+\log\log n}{n},
$$
and $t_i$ defined by $p_i=1-e^{-t_i}$.  For $k=2$, our argument showed that w.h.p.\ throughout
$[t_1,t_2]$ the complex $Y_t(n)$ continues to be a giant component plus a set of isolated
edges, whose number can decrease by $1$ only. That is, w.h.p.\ there is a random moment
$\tau\in (t_1,t_2)$ when the last isolated edge disappears, being swallowed by the giant
component. We need to show that w.h.p.\ $H^1(Y_{\tau}(n))$ vanishes as well. Suppose not.
Then there exists a $1$-cochain $f$, of support size $2$ or more, meeting the three 
conditions necessary for a non-trivial cocycle, such that none of
$X(G(f))$ triangles is present in $Y_{\tau-}(n)$, i.e.\   $Y_{\tau}(n)$ minus the triangle
born at time $\tau$. Let $(u,v)$ denote a generic value of the 
last isolated edge that disappeared at time $\tau$. Then 
\begin{equation}\label{XuvG,XG}
X_{(u,v)}(G(f))\le X(G(f))\le X_{(u,v)}(G(f)) + n-2,
\end{equation}
where $X_{(u,v)}(G(f))$ is the number of triangles, except those containing $(u,v)$, that
contain an odd number of edges of $G(f)$. Introduce $Y_t(n;(u,v))$, a subcomplex of
$Y_t(n)$, with all the triangles in $Y_t(n)$, if any, that contain $(u,v)$ being deleted. Then
\begin{multline}\label{PH1not}
\Bbb P(H^1(Y_{\tau}(n))\text{ does not vanish})\\
\le o(1)+\sum_{f,\,(u,v)}\int\limits_{t_1}^{t_2}(1-p(t))^{X_{(u,v)}(G(f))}\,(n-2)e^{-t}(1-p(t))^{n-3}\,dt.
\end{multline}
{\it Explanation.\/}  $o(1)$ stands for the probability that $\tau\in (t_1,t_2)$ does not exist.
$t$ is the generic value of a time when the edge $(u,v)$ stops being isolated. 
$(1-p(t))^{X_{(u,v)}(G(f))}$ is the probability that none of $X_{(u,v)}(G(f))$ triangles are
present in $Y_t(n;(u,v))$.
\si

Let $m=m(f)\ge 2$ be support size for $f$. By Meshulam-Wallach inequality and \eqref{XuvG,XG},
$$
X_{(u,v)}(G(f))\ge X(G(f)) - (n-2)\ge \frac{nm}{3} - (n-2).
$$
So, like part {\bf (2)\/} of the proof of Lemma \ref{p=(2logn+xn)/n}, for $m\ge 2m_n=6n^{4/3}e^{|c|}$, 
\begin{equation}\label{m>2mn}
\sum_{f:e(G(f))\ge 2m_n}(1-p(t))^{X_{(u,v)}(G(f))}\le \exp(-n^{4/3}),
\end{equation}
uniformly for $t\in [t_1,t_2]$. Hence the contribution of all such $f$'s to the RHS of 
\eqref{PH1not} is superexponentially small. 
\si

Suppose that $m \le 2m_n$. Let $\nu=|V(C(f))|$, $C(f):=C(G(f))$. Then
$$
X_{(u,v)}(G(f))\ge (\nu-1)m -\sum_{v\in V(C(f))}d_v^2.
$$
Indeed, using the proof of \eqref{Y13>} we see the following. (1) If $(w,x)$ from the support of
$f$ is not  in a triangle containing $(u,v)$, then the number of triangles that contain
$(w,x)$ as the only edge supporting $f$ is $\nu-(d_w+d_x)$, at least.  (2) If on the other hand
$(w,x)$ and $(u,v)$ are edges from the same triangle, then this triangle is unique, and so
the number of the triangles containing $(w,x)$ as the only edge supporting $f$  is
$(\nu-1)-(d_w+d_x)$, at least. Hence
\begin{align*}
X_{(u,v)}(G(f))\ge&\, (\nu-1)m -\sum_{(w,x)\in E(C(f))}\!\!\!(d_w+d_x)\\
=&\,(\nu-1)m -\sum_{v\in V(C(f))}\!\!\!d_v^2.
\end{align*}
So, with only trivial changes, we obtain a counterpart of \eqref{modernu,mu}: for $a>0$,
\begin{equation}\label{m,nu;moderate}
\sum_{\nu\ge an\atop \nu-1\le m\le 2m_n}\sum_{|V(C(f))|=\nu,\,|E(C(f))|=m}\!\!\!\!\!\!\!
(1-p(t))^{X_{(u,v)}(G(f))}
\le_b \exp(-\gamma n\log n),
\end{equation}
$\gamma=\gamma(a)>0$.
\si

Finally, suppose that $\nu\le an$, $a<1$. A counterpart of the bound \eqref{triv} is
$$
X_{(u,v)}(G(f))\ge (n-\nu-1)m,
$$
since, given an edge supporting $f$,  there can be at most one triangle that contains
this edge and $(u,v)$. So, with only minor changes in the part {\bf (1)\/} of Lemma
\ref{p=(2logn+xn)/n}, we obtain: for $a<1/2$,
\begin{multline}\label{nu<an}
\sum_{3\le \nu\le an\atop \nu-1\le m}\sum_{|V(C(f))|=\nu,\,|E(C(f))|=m}\!\!\!\!\!\!\!
(1-p(t))^{X_{(u,v)}(G(f))}\\
=O(n^{-1}e^{2\log\log n})=O(n^{-1}\log^2 n),
\end{multline}
uniformly for $t\in [t_1,t_2]$.
\bi

Combining \eqref{m>2mn}, \eqref{m,nu;moderate} and \eqref{nu<an}, we have
$$
\sum_f(1-p(t))^{X_{(u,v)}(G(f))} = O(n^{-1}\log^2 n).
$$
Consequently, the bound \eqref{PH1not} becomes
\begin{multline*}
\Bbb P(H^1(Y_{\tau}(n))\text{ does not vanish})\\
\le_b o(1)+n^3e^{-np_1}(t_2-t_1)n^{-1}\log^2 n\\
\le_b o(1)+n^{-1}\log^4 n\to 0,\quad n\to\infty.
\end{multline*}
This completes the proof of Theorem \ref{thm:homBT,2}.\qed

\section{Proof of Theorem \ref{thm:homconn,2} for $k\ge 2$} 

Our proof is a minor refinement of the proofs in \cite{lm}-\cite{mw}. Like the case $k=2$ in Section 3,
we need to show that, for 
$$
p=\frac{k \log n +x_n}{n},\quad |x_n|=o(\log n),
$$
\begin{align}
\sum_{H:|E(H)|>1}\exp\bigl[-pX(H)\bigr]\to 0,\label{3.5}\\
\sum_{H:|E(H)|=1}\exp\bigl[-pX(H)\bigr]\sim\frac{e^{-x_n}}{k!}.\label{3.6}
\end{align}
Here $H$ is a hypergraph on hypervertex-set $\binom{[n]}{[k-1]}$, with hyperedge-set 
$E(H)\subseteq \binom{[n]}{[k]}$, and $X(H)$ is the total number of $k$-simplexes that contain
an odd number of hyperedges of $H$. Further, an admissible $H$ meets the conditions:
\si
(1) maximum hypervertex degree is at most $\lfloor (n-k+1)\rfloor/2$;
\si
(2) $H$ has a single non-trivial component;
\si
(3) $X(H)\ge n|E(H)|/(k+1)$.
\si

For $|E(H)|$  ``small'',  the argument is analogous to the part {\bf (1)\/} of the proof
of Lemma \ref{p=(2logn+xn)/n}. For completeness, here it is.

Let $\nu=\nu(H)$ denote the range of $H$, i. e. 
$$
\nu(H):=|\{i\in [n]: i\in e\text{ for some }e\in E(H)\}|;
$$
then $X(H)\ge (n-\nu)m$. Since $H$ has a single non-trivial component, 
$$
m:=|E(H)|\ge \nu - (k-1).
$$

Let $k\le\nu\le an$, $0<a<1/(2k)$. Then $X(H)\ge (n-\nu)m$. The total number
of $H$ with $|E(H)|=m$ and $\nu(H)=\nu$, is at most
$$
\binom{n}{\nu}\binom{\binom{\nu}{k}}{m}\le \binom{n}{\nu}\left(\frac{e\binom{\nu}{k}}{m}\right)^m.
$$
So
$$
\sum_{H:\nu(H)=\nu,\,|E(H)|=m}\exp\bigl[-pX(H)\bigr]\le\, S(\nu,m),
$$
where
$$
S(\nu,m):=\binom{n}{\nu} \left(\frac{e\binom{\nu}{k}}{m}\right)^{m}\exp\bigl[-p(n-\nu)m\bigr].
$$
Now, using $m\ge \nu -(k-1)$,
\begin{align*}
\frac{S(\nu,m+1)}{S(\nu,m)}\le&\, \frac{\nu^2}{m+1}\exp\bigl[-p(n-\nu)\bigr]\\
\le&\, k\nu\exp\bigl[-p(n-\nu)\bigr]\le_b \frac{e^{|x_n|}}{n^{(k-1)-ka}}\to 0,
\end{align*}
as $k-1-ka>0$. Hence
\begin{align*}
\sum_{m\ge\nu-1}S(\nu,m)\le& 2S(\nu,\nu-k+1)\le_b S(\nu);\\
S(\nu):=&\,(\beta n)^{\nu}\nu^{(k-2)\nu}\exp\bigl[-p(n-\nu)(\nu-k+1)\bigr],
\end{align*}
for some $\beta>0$. It is easy to check that $\rho(\nu):=S(\nu+1)/S(\nu)$ increases with $\nu$,
and
$$
\rho(an)\le_b\frac{e^{|x_n|}}{n^{1-2ak}}\to 0,
$$
as $a<1/2k$. So
\begin{equation*}
\sum_{\nu=k+1}^{an}S(\nu)\le_b S(k+1)\le_b \frac{e^{2|x_n|}}{n^{k-1}},
\end{equation*}
which proves that
$$
\sum_{H: |E(H)|>1,\,\nu(H)\le an}\exp\bigl[-pX(H)\bigr]\to 0.
$$
In addition,
$$
\sum_{H:|E(H)|=1}\exp\bigl[-pX(H)\bigr]=\binom{n}{k}\exp\bigl[-p(n-k)]\sim\frac{e^{-x_n}}{k!},
$$
which proves \eqref{3.6}.
\si

The relation \eqref{3.5} will follow if we can show that 
$$
\sum_{H: |E(H)|>1,\,\nu(H)\ge  an}\exp\bigl[-pX(H)\bigr]\to 0,
$$
as well. For those $H$,
$$
|E(H)|\ge \nu(H) +k-1\ge an+k-1\ge an/2,
$$
and all we need is to cite a remarkable bound established in \cite{mw}: for a given
$\alpha>0$,
\begin{equation}\label{remark}
\sum_{H: |E(H)|\ge \alpha n}\exp\bigl[-pX(H)\bigr]\le \exp\bigl[-\Omega(n\log n)\bigr].
\end{equation}
(To be sure, \eqref{remark} was stated and proved for $x_n\to\infty$ arbitrarily slow. 
However, only obvious changes in the proof are required to cover $|x_n|=o(\log n)$.)
This highly non-trivial estimate was obtained by showing, via probabilistic method, that
for every hypergraph $H$, either there are ``many'' $k$-simplexes that contain exactly one
of hyperedges of $H$, or there is a ``small'' subset $\hat E\subset E(H)$ such that almost all other
hyperedges are incident to hyperedges in $\hat E$.
\bi

With \eqref{3.5}-\eqref{3.6}, the proof of Theorem \ref{thm:homconn,2} for $k>2$
is completed exactly like for $k=2$ in Section 3.\qed

\si

\bi

{\bf Acknowledgment.\/} The authors are very grateful to Nati Linial and Roy Meshulam for several 
inspiring, helpful conversations, both in person and through email. 

\bibliographystyle{plain}
\bibliography{KPrefs}

\def\polhk#1{\setbox0=\hbox{#1}{\ooalign{\hidewidth
  \lower1.5ex\hbox{`}\hidewidth\crcr\unhbox0}}}
\begin{thebibliography}{10}

\bibitem{al}
L.~Aronshtam, N.~Linial, T.~Luczak, and R.~Meshulam.
\newblock Collapsibility and vanishing of top homology in random simplicial
  complexes.
\newblock To appear in Discrete and Computational Geometry, 2011.

\bibitem{bh}
Eric Babson, Christopher Hoffman, and Matthew Kahle.
\newblock The fundamental group of random 2-complexes.
\newblock {\em J. Amer. Math. Soc.}, 24(1):1--28, 2011.

\bibitem{bc}
Edward~A. Bender and E.~Rodney Canfield.
\newblock The asymptotic number of labeled graphs with given degree sequences.
\newblock {\em J. Combinatorial Theory Ser. A}, 24(3):296--307, 1978.

\bibitem{br}
B{\'e}la Bollob{\'a}s and Oliver Riordan.
\newblock Clique percolation.
\newblock {\em Random Structures Algorithms}, 35(3):294--322, 2009.

\bibitem{bt}
B{\'e}la Bollob{\'a}s and Andrew Thomason.
\newblock Random graphs of small order.
\newblock In {\em Random graphs '83 ({P}ozna\'n, 1983)}, volume 118 of {\em
  North-Holland Math. Stud.}, pages 47--97. North-Holland, Amsterdam, 1985.

\bibitem{er}
P.~Erd{\H{o}}s and A.~R{\'e}nyi.
\newblock On random graphs. {I}.
\newblock {\em Publ. Math. Debrecen}, 6:290--297, 1959.

\bibitem{Hatcher}
Allen Hatcher.
\newblock {\em Algebraic topology}.
\newblock Cambridge University Press, Cambridge, 2002.

\bibitem{jlr}
Svante Janson, Tomasz {\L}uczak, and Andrzej Rucinski.
\newblock {\em Random graphs}.
\newblock Wiley-Interscience Series in Discrete Mathematics and Optimization.
  Wiley-Interscience, New York, 2000.

\bibitem{ko}
Dmitry~N. Kozlov.
\newblock The threshold function for vanishing of the top homology group of
  random {$d$}-complexes.
\newblock {\em Proc. Amer. Math. Soc.}, 138(12):4517--4527, 2010.

\bibitem{lm}
Nathan Linial and Roy Meshulam.
\newblock Homological connectivity of random 2-complexes.
\newblock {\em Combinatorica}, 26(4):475--487, 2006.

\bibitem{lo}
L{\'a}szl{\'o} Lov{\'a}sz.
\newblock {\em Combinatorial problems and exercises}.
\newblock North-Holland Publishing Co., Amsterdam, second edition, 1993.

\bibitem{mw}
R.~Meshulam and N.~Wallach.
\newblock Homological connectivity of random {$k$}-dimensional complexes.
\newblock {\em Random Structures Algorithms}, 34(3):408--417, 2009.

\bibitem{st}
V.~E. Stepanov.
\newblock Combinatorial algebra and random graphs.
\newblock {\em Teor. Verojatnost. i Primenen}, 14:393--420, 1969.

\end{thebibliography}
\end{document}